\title[Non-commuting, non-generating graphs of nilpotent groups]{The non-commuting, non-generating graph\\of a nilpotent group}
\author{Peter J. Cameron, Saul D. Freedman and Colva M. Roney-Dougal}
\date{\today}
\subjclass[2010]{20F18, 05C25}
\thanks{\textit{\keywordsname}. $p$-group, nilpotent group, generating graph, commuting graph}
\address{ %
School of Mathematics and Statistics, University of St Andrews, St Andrews, KY16 9SS, UK}
\email{pjc20@st-andrews.ac.uk}
\email{sdf8@st-andrews.ac.uk}
\email{colva.roney-dougal@st-andrews.ac.uk}
\newtheorem{thm}{Theorem}[section]
\newtheorem*{thm*}{Theorem}
\newtheorem*{conj*}{Conjecture}
\newtheorem{cor}[thm]{Corollary}
\newtheorem{lem}[thm]{Lemma}
\newtheorem{prop}[thm]{Proposition}
\theoremstyle{remark}
\theoremstyle{definition}
\newtheorem{rem}[thm]{Remark}
\numberwithin{equation}{section}
\newcommand\diam{\mathrm{diam}}
\newcommand\nc{\Xi}
\newcommand\nd{\Xi^+}
\begin{document}

\onehalfspacing

\begin{abstract}
For a nilpotent group $G$, let $\nc(G)$ be the difference between the complement of the generating graph of $G$ and the commuting graph of $G$, with vertices corresponding to central elements of $G$ removed. That is, $\nc(G)$ has vertex set $G \setminus Z(G)$, with two vertices adjacent if and only if they do not commute and do not generate $G$. Additionally, let $\nd(G)$ be the subgraph of $\nc(G)$ induced by its non-isolated vertices. We show that if $\nc(G)$ has an edge, then $\nd(G)$ is connected with diameter $2$ or $3$, with $\nc(G) = \nd(G)$ in the diameter $3$ case. In the infinite case, our results apply more generally, to any group with every maximal subgroup normal. When $G$ is finite, we explore the relationship between the structures of $G$ and $\nc(G)$ in more detail.
\end{abstract}

\maketitle

\section{Introduction}
\label{sec:intro}

A number of graphs have been defined on the set of elements of a group, aiming to capture some aspect of the group structure in graph-theoretic terms. Prominent among these is the \emph{commuting graph}, in which two elements are joined by an edge if and only if they commute. This graph is trivially connected with diameter at most $2$, since the identity is connected to all other vertices. However, if we remove the central elements of the group, then the commuting graph is no longer obviously connected, and indeed it fails to be connected for some groups. Giudici and Parker \cite{giudiciparker} showed that, even if it is connected, its diameter may be unbounded. On the other hand, Morgan and Parker \cite{morganparker} showed that if a group has trivial centre, then any connected component of its commuting graph has diameter at most $10$.

The commuting graph of a group $G$ fits into a hierarchy of graphs as follows. Each graph now has vertex set $G\setminus\{1\}$, and the rules for adjacency of vertices $x$ and $y$ are as follows:
\begin{itemize}
\item the \emph{power graph}: one of $x$ and $y$ is a power of the other;
\item the \emph{enhanced power graph}: $x$ and $y$ are powers of some element $z$;
\item the \emph{commuting graph}: $[x,y] = 1$;
\item the \emph{non-generating graph}: $\langle x,y\rangle\ne G$;
\item the \emph{complete graph}: all pairs are adjacent.
\end{itemize}
Observe that each graph is a spanning subgraph of the next (except for the third and fourth when $G$ is $2$-generated and abelian). This allows us to refine questions about connectedness, and ask, for instance, whether the difference between consecutive graphs in the hierarchy is connected (often with specified vertices removed). For example, the difference between the complete and non-generating graphs of $G$ is the \emph{generating graph} $\Gamma(G)$, where $x$ and $y$ are adjacent if and only if $\langle x, y \rangle = G$. For convenience, we will write $\Gamma^+(G)$ to denote the subgraph of $\Gamma(G)$ induced by its non-isolated vertices.


The most basic question is whether the difference between consecutive graphs has any edges at all, for $|G| > 2$. Finite groups whose power graph and enhanced power graph are equal, or whose enhanced power graph and commuting graph are equal, were determined by Aalipour \emph{et al}.~\cite{aalipour}. The commuting graph and the non-generating graph of a non-abelian group are equal if and only if the group is \emph{minimal non-abelian}, i.e., every proper subgroup of the group is abelian. Finally, $\Gamma(G)$ has no edges if and only if $G$ is not $2$-generated.

More complicated questions regarding connectedness have also been asked and resolved in the case of $\Gamma(G)$. For example, Breuer, Guralnick and Kantor \cite[Theorem 1.2]{breuer} proved that if $G$ is a non-abelian finite simple group, then $\Gamma(G)$ is extremely ``dense'', in the sense that it is connected with diameter $2$. More recently, Burness, Guralnick and Harper \cite[Corollary 6]{burness} generalised this result: if $G$ is a finite group, then $\Gamma(G)$ either has an isolated vertex or is connected with diameter at most $2$.

The structure of $\Gamma(G)$ is less predictable, however, when isolated vertices are involved. For example, Crestani and Lucchini \cite[Theorem 1.3]{crestani} showed that although $\Gamma^+(G)$ is connected when $G$ is a $2$-generated direct power of $\mathrm{SL}(2,2^p)$, with $p$ an odd prime, $G$ can be chosen so that the diameter of $\Gamma^+(G)$ is arbitrarily large. On the other hand, Lucchini \cite[Corollary 4]{lucchinisoluble} proved that if $G$ is finite and nilpotent, then $\Gamma^+(G)$ is connected with diameter at most $2$.

This motivates the present paper, which considers the \emph{non-commuting, non-generating graph} $\nc(G)$ of a group $G$, i.e., the difference between the non-generating graph and the commuting graph of $G$. In this case, we remove all vertices corresponding to central elements of $G$, as otherwise these would always be isolated. Thus $\nc(G)$ is the graph whose vertices are $G \setminus Z(G)$, with vertices $x$ and $y$ adjacent if and only if $[x,y] \ne 1$ and $\langle x, y \rangle \ne G$. In addition, we write $\nd(G)$ to denote the subgraph of $\nc(G)$ induced by its non-isolated vertices.

We will focus in this paper on the case where $G$ is a (not necessarily finite) nilpotent group, and describe the possible diameters of the connected components of $\nc(G)$. As the unique nontrivial connected component of the generating graph of a finite $2$-generated nilpotent group $G$ is always extremely dense, one may expect that the same is not necessarily true for $\nc(G)$, which is a proper subgraph of the complement of this generating graph. However, the first of our main theorems shows that this is actually the case for any nilpotent group. 

\begin{thm}
\label{thm:nilpncsummary}
Let $G$ be a nilpotent group. If $\nc(G)$ contains an edge, then $\nd(G)$ is connected with diameter $2$ or $3$. Moreover, if $\diam(\nd(G)) = 3$, then $\nc(G) = \nd(G)$.
\end{thm}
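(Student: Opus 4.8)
The plan is to exploit the hypothesis through the structural fact that, when every maximal subgroup of $G$ is normal (in particular when $G$ is nilpotent), each maximal subgroup has prime index and contains $G'$, so $G' \le \Phi(G)$ and $\langle x, y\rangle = G$ if and only if the images $\bar x, \bar y$ generate $G/\Phi(G)$. Two consequences drive everything: first, $[x,y] \in \Phi(G)$ whenever $x$ and $y$ fail to commute; and second, any non-central $h \in \Phi(G)$ is adjacent in $\nc(G)$ to every element it does not centralise, since $\langle h, v\rangle \le \langle v, \Phi(G)\rangle \ne G$ for all $v$ (as $G$ is non-cyclic). I will call such an $h$ a \emph{hub}. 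Before the main argument I would dispose of the case where $G$ is not $2$-generated: there no pair generates $G$, so $\nc(G)$ coincides with the non-commuting graph on $G \setminus Z(G)$, which is connected of diameter $2$ whenever it has an edge and has no isolated vertices, yielding the conclusion with $\nc(G) = \nd(G)$.

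For $2$-generated $G$ I would view $\nc(G)$ as the spanning subgraph of the non-commuting graph $\mathcal C$ on $G \setminus Z(G)$ obtained by deleting exactly those edges $\{x,y\}$ with $\langle x,y\rangle = G$. Since $\mathcal C$ is connected of diameter $2$, the whole problem reduces to re-routing around these deleted generating edges by paths of length at most $3$. A commuting pair $x,y$ of non-isolated vertices should be joined through a common non-centralising vertex $z$ chosen, where possible, so that neither $\{x,z\}$ nor $\{z,y\}$ generates $G$: a hub not centralising both $x$ and $y$ works immediately, and otherwise I would obtain a suitable $z$ by perturbing $x$ within its coset $x\Phi(G)$, which preserves the image in $G/\Phi(G)$, hence non-generation, while one checks that some such perturbation fails to commute with $x$. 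Configurations not covered by a hub or such a perturbation are absorbed into the structural analysis below.

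The heart of the proof, and the step I expect to be hardest, is the case $[x,y]\ne 1$ with $\langle x,y\rangle = G$, a non-commuting pair forbidden in $\nc(G)$ precisely because it generates. Here I would replace the single forbidden edge by two admissible steps, routing through an intermediate vertex of the same $G/\Phi(G)$-image as $x$ (an element of $x\Phi(G)$ not centralising $x$) or through a hub non-centralising both endpoints, and then verify that length $3$ always suffices. The genuinely delicate subcase is $\Phi(G) \le Z(G)$, where no hub exists; then $G$ has nilpotency class $2$ with $G' \le \Phi(G) \le Z(G)$, and I expect the mere presence of an edge in $\nc(G)$ to force $G/\Phi(G)$ to be large enough, or to involve enough distinct primes, that non-generating non-centralising intermediates still exist. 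Establishing this dichotomy by a structural analysis of $G/\Phi(G)$ relative to $Z(G)$ is where the real work lies.

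Finally, the bound $\diam(\nd(G)) \ge 2$ is routine, since any vertex commutes with a non-central, non-isolated associate (such as a suitable power or inverse) to which it is therefore non-adjacent, so $\nd(G)$ is never complete. For the last assertion I would argue contrapositively: if $\nc(G) \ne \nd(G)$, so some vertex $v$ is isolated, then every element outside the centraliser of $v$ generates $G$ together with $v$. This is a strong constraint, and I would use it to produce a hub or otherwise to show that every two non-isolated vertices lie at distance at most $2$; hence $\diam(\nd(G)) = 3$ can occur only when $\nc(G)$ has no isolated vertices, that is, when $\nc(G) = \nd(G)$.
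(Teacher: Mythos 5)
Your overall framing is reasonable and several of your reduction steps are correct (the non-$2$-generated case, the facts that $G' \le \Phi(G)$ when all maximal subgroups are normal, and the ``hub'' property of non-central elements of $\Phi(G)$). But the proposal leaves the two hardest parts of the theorem unproved, and you say so yourself. First, the subcase $\Phi(G) \le Z(G)$ is not a degenerate corner: it genuinely occurs for $2$-generated nilpotent groups whose graph has edges. For instance, take $G = P \times C_q$ with $P$ minimal non-abelian and $q$ a prime not dividing $|P|$; then $\Phi(G) = \Phi(P) \times 1 = Z(P) \times 1 \le Z(G)$, yet $\nc(G)$ has edges inside $P \times 1$. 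In that situation there are no hubs, and your perturbation device gives nothing either: for $z = xf$ with $f \in \Phi(G)$ one has $[z,x] = [f,x] = 1$, so no element of $x\Phi(G)$ is adjacent to $x$ precisely when hubs fail to exist. You would need a different mechanism to produce intermediates, and ``I expect the mere presence of an edge to force $G/\Phi(G)$ to be large enough'' is a conjecture, not an argument. Second, the ``Moreover'' clause (an isolated vertex forces diameter $2$) is likewise only a statement of intent.

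For comparison, the paper's proof bypasses $\Phi(G)$ entirely. Proposition \ref{prop:isolvertnc} shows that every non-isolated vertex $x$ lies in some maximal subgroup $L$ with $x \in L \setminus Z(L)$; since all maximal subgroups are normal, the intersection of two of them is maximal in each, and the single technical Lemma \ref{lem:ncmaxnorm} bounds $d(x,y) \le 3$ by a case analysis on the position of $x$ relative to $(M,Z(M))$ and of $y$ relative to $(L,Z(L))$, using only that a group is never the union of two proper subgroups and that elements of $Z(L)$, for $L$ non-abelian, are non-generators. Crucially, that lemma also characterises exactly when $d(x,y) = 3$, and Propositions \ref{prop:normisol} and \ref{prop:noabelmax} show that the existence of an isolated vertex forces every non-abelian maximal subgroup to have centre contained in $Z(G)$, which rules out the distance-$3$ configuration; this is what delivers the ``Moreover'' clause. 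Your approach might be completable, but as it stands the two places you flag as ``where the real work lies'' are precisely the content of the theorem.
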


We will in fact prove that the conclusion of this theorem holds whenever $G$ is a group with every maximal subgroup normal. This is a weaker condition than nilpotency, as illustrated by the infinite $2$-generated $3$-group constructed by Gupta and Sidki \cite{guptasidki}. Indeed, every maximal subgroup of this group is normal \cite[Theorem 4.3]{pervova}; however, the group has no finite presentation \cite{sidki}, and is therefore not nilpotent. For further discussion of groups with every maximal subgroup normal, see, for example \cite{myropolskaarxiv,myropolska}.

\bigskip

It is clear that, for an arbitrary group $G$, the graph $\nc(G)$ is empty if and only if $G$ is abelian. As mentioned above, $\nc(G)$ has vertices but no edges if and only if $G$ is minimal non-abelian. Since a non-central element of a group is centralised by at most one maximal subgroup, a minimal non-abelian group is necessarily $2$-generated.

The finite minimal non-abelian groups were classified by Miller and Moreno \cite{miller} in 1903 (see also \cite{redei}). In particular, any such group is either a $p$-group, for some prime $p$, or a non-nilpotent group whose order is divisible by exactly two primes. A concise description of the finite minimal non-abelian $p$-groups is given in \cite[Theorem 2.4]{zhangsun}. On the other hand, a classification of infinite minimal non-abelian groups is not complete. Well-known examples are the \emph{Tarski monsters}, which are simple groups where every proper nontrivial subgroup is cyclic of fixed prime order $p$. Ol'shanski\u{\i} \cite{olshanskii} proved that a Tarski monster exists for each prime $p > 10^{75}$.

%
%

The relationship between $G$ and $\nc(G)$ is less clear when $\nc(G)$ has edges. The remaining two main theorems of this paper provide a detailed overview of this relationship in the case where $G$ is a finite nilpotent group. We let $\Phi(G)$ denote the Frattini subgroup of $G$.

\begin{thm}
\label{thm:pgpnc}
Let $G$ be a finite $p$-group. Then one of the following occurs.
\begin{enumerate}[label={(\roman*)},font=\upshape]
\item $G$ is either abelian or minimal non-abelian. In this case, $\nc(G)$ has no edges.
\item $G$ is non-abelian and not $2$-generated. In this case, $\nc(G)$ is connected with diameter $2$.
\item $G$ is non-abelian, $2$-generated and not minimal non-abelian, and contains at most one abelian maximal subgroup. Furthermore, each maximal subgroup contains $Z(G)$.
\begin{enumerate}[label={(\alph*)},font=\upshape]
\item If $G$ has an abelian maximal subgroup $M$, then $\nd(G)$ is connected with diameter $2$, and the isolated vertices of $\nc(G)$ are precisely the elements of $M \setminus \Phi(G)$.
\item If the centre of each maximal subgroup of $G$ is equal to $Z(G)$, then $\nc(G)$ is connected with diameter $2$.
\item If all maximal subgroups of $G$ are non-abelian, and at least one has a centre properly containing $Z(G)$, then $\nc(G)$ is connected with diameter $3$.
\end{enumerate}
\end{enumerate}
\end{thm}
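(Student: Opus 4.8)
The plan is to reduce adjacency in $\nc(G)$ to a condition on maximal subgroups, dispose of (i) and (ii) quickly, and then carry out a structural analysis for (iii). The key observation is that in a finite $p$-group, $\langle x,y\rangle=G$ if and only if $\bar x,\bar y$ generate $G/\Phi(G)$; when $G$ is $2$-generated, $G/\Phi(G)\cong C_p\times C_p$, so $\langle x,y\rangle\ne G$ exactly when $x$ and $y$ lie in a common maximal subgroup. Hence vertices of $\nc(G)$ are adjacent iff they fail to commute and lie in a common maximal subgroup, and since distinct maximal subgroups meet in $\Phi(G)$, the graph $\nc(G)$ is the union of the non-commuting graphs of the maximal subgroups $M_1,\dots,M_{p+1}$ (with central elements deleted), glued along $\Phi(G)\setminus Z(G)$. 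Case (i) is immediate from the already-stated fact that $\nc(G)$ has an edge iff $G$ is neither abelian nor minimal non-abelian. For (ii), a non-$2$-generated $p$-group has $\langle x,y\rangle\ne G$ for every pair, so $\nc(G)$ is precisely the non-commuting graph on $G\setminus Z(G)$; given commuting non-central $x,y$, any $w\notin C_G(x)\cup C_G(y)$ (which exists, as a group is never the union of two proper subgroups) is a common neighbour, giving diameter $2$.

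For (iii), assume $G$ is $2$-generated, non-abelian and not minimal non-abelian. First, $Z(G)\subseteq\Phi(G)$: a central $z\notin\Phi(G)$ together with a suitable $g$ would give $G=\langle z\rangle\langle g\rangle$ with $\langle z\rangle$ central, forcing $G$ abelian; thus every maximal subgroup contains $Z(G)$. Next, $G$ has at most one abelian maximal subgroup, since two of them would intersect in $\Phi(G)$ and generate $G$, forcing $\Phi(G)=Z(G)$, whence every maximal subgroup $Z(G)\langle w\rangle$ would be abelian and $G$ minimal non-abelian, a contradiction. Finally, adjacency shows that a vertex $x\in M_i\setminus\Phi(G)$ (whose only maximal subgroup is $M_i$) is isolated in $\nc(G)$ iff $x\in Z(M_i)$, whereas every $x\in\Phi(G)\setminus Z(G)$ is non-isolated. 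Locating the isolated vertices therefore amounts to comparing each $Z(M_i)$ with $\Phi(G)$.

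The main structural input, and the step I expect to be the principal obstacle, is that $Z(M)\subseteq\Phi(G)$ for every \emph{non-abelian} maximal subgroup $M$. If $z\in Z(M)\setminus\Phi(G)$ then $C_G(z)=M$ and $M=\langle z\rangle\Phi(G)$ with $z$ central in $M$, so $M'=\Phi(G)'$. When an abelian maximal subgroup $A$ exists (case (iii)(a)), I would write $G=A\langle z\rangle$ and use that $m\mapsto[m,z]$ is a homomorphism $A\to G'$ with kernel $C_A(z)=\Phi(G)$ and image $[A,z]=G'$; then $|G'|=|A/\Phi(G)|=p$, making $G$ minimal non-abelian, a contradiction. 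Hence in case (a) the isolated vertices are exactly $A\setminus\Phi(G)$. When every maximal subgroup is non-abelian, the same conclusion must hold so that $\nc(G)$ is connected; here $z\in Z(M)\setminus\Phi(G)$ forces $\Phi(G)$ to be non-abelian and $z\in C_G(\Phi(G))\setminus\Phi(G)$, and I would rule out $C_G(\Phi(G))\Phi(G)=G$ (which would place the non-abelian $\Phi(G)$ inside $Z(G)$) and treat the remaining case by passing to $G/\Phi(G)'$, where $M/\Phi(G)'$ becomes an abelian maximal subgroup, and combining the previous paragraph with the hypothesis that $G$ is not minimal non-abelian.

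Granting $Z(M_i)\subseteq\Phi(G)$ for non-abelian $M_i$, the diameters follow from a ``common neighbour in $\Phi(G)$'' technique. For $x\in M_i\setminus\Phi(G)$ with $M_i$ non-abelian we have $\langle x,\Phi(G)\rangle=M_i$ and $x\notin Z(M_i)$, so $x$ does not centralise $\Phi(G)$; hence for $x\in M_i\setminus\Phi(G)$ and $y\in M_j\setminus\Phi(G)$ with $i\ne j$, some $w\in\Phi(G)\setminus(C_G(x)\cup C_G(y))$ is a common neighbour, and the mixed and same-block cases are similar, yielding diameter $2$ for $\nd(G)$ in (a) and for $\nc(G)$ in (b). In case (c), a vertex $u\in Z(M_i)\setminus Z(G)\subseteq\Phi(G)$ is adjacent to every vertex outside $M_i$ and to none inside $M_i$; for $y\in M_i\setminus\Phi(G)$ a common neighbour of $u$ and $y$ would have to lie simultaneously outside and inside $M_i$, so $d(u,y)\ge3$, while a path $u-w-w'-y$ with $w\in M_k\setminus\Phi(G)$ ($k\ne i$) and $w'\in\Phi(G)$ realises $d(u,y)=3$; checking that all remaining distances are at most $3$ then gives $\diam(\nc(G))=3$.
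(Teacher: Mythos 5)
Your overall architecture is sound and closely parallels the paper's: reduce adjacency to ``non-commuting and sharing a maximal subgroup'', establish that $Z(G)\le\Phi(G)$ and that there is at most one abelian maximal subgroup, locate the isolated vertices via $Z(M_i)$ versus $\Phi(G)$, and realise distance $2$ by a common neighbour in $\Phi(G)$ (this is exactly the paper's Lemma \ref{lem:ncmaxnorm} specialised to $L\cap M=\Phi(G)$). The distance-$3$ analysis in case (c) is also the paper's.

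However, there is a genuine gap at the step you yourself flag as the principal obstacle: the claim that $Z(M)\le\Phi(G)$ for every \emph{non-abelian} maximal subgroup $M$. Your treatment of it is incomplete in both branches. In the branch with an abelian maximal subgroup, the final inference ``$|G'|=p$, making $G$ minimal non-abelian'' is asserted without proof; it is true, but it requires showing $\Phi(G)=G'G^p\le Z(G)$ via $[x,g^p]=[x,g]^p$ when $G'$ is central of order $p$, and is not an immediate consequence. In the all-non-abelian branch the argument does not close: the assertion that $C_G(\Phi(G))\Phi(G)=G$ ``would place the non-abelian $\Phi(G)$ inside $Z(G)$'' is not correct as stated (it yields only $Z(\Phi(G))\le Z(G)$), and the proposed reduction to $G/\Phi(G)'$ fails because that quotient may be abelian or may itself be minimal non-abelian, so the hypothesis that $G$ is not minimal non-abelian does not descend to it. The paper closes this hole with a two-line argument you should adopt (Proposition \ref{prop:normisol}): if $z\in Z(M)\setminus\Phi(G)$, then $M$ is the unique maximal subgroup containing $z$, so $\langle z,h\rangle=G$ for any $h\in G\setminus M$; since $Z(M)$ is characteristic in the normal subgroup $M$, it is normal in $G$, whence $G/Z(M)=\langle Z(M)h\rangle$ is cyclic, so $M/Z(M)$ is cyclic and $M$ is abelian. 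With that lemma in place, the rest of your proof goes through essentially as in the paper.
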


Using Magma \cite{magma}, we can show that the groups numbered $(16,7)$, $(243,3)$ and $(32,6)$ in the Small Groups Library \cite{smallgroups} are groups of the smallest order satisfying (iii)(a), (b) and (c), respectively.


\begin{thm}
\label{thm:nilpgpnc}
Let $G$ be a finite nilpotent group whose order is divisible by at least two primes. Then one of the following occurs.
\begin{enumerate}[label={(\roman*)},font=\upshape]
\item $G$ is abelian. In this case, $\nc(G)$ is the empty graph.
\item $G$ is non-abelian and contains at least two non-cyclic Sylow subgroups. In this case, $\nc(G)$ is connected with diameter $2$.
\item $G = P \times H$, with $P$ a non-abelian Sylow subgroup of $G$ and $H$ cyclic.
\begin{enumerate}[label={(\alph*)},font=\upshape]
\item If $\nc(P)$ contains no isolated vertices, then $\nc(G)$ is connected, and $\diam(\nc(G)) = \diam(\nc(P)) \in \{2,3\}$.
\item If $\nc(P)$ contains isolated vertices, then $\nd(G)$ is connected with diameter $2$, and $(g,h) \in G$ is an isolated vertex of $\nc(G)$ if and only if $g$ is an isolated vertex of $\nc(P)$ and $h$ is a generator of $H$.
\end{enumerate}
\end{enumerate}
\end{thm}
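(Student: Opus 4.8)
The plan is to exploit the Sylow decomposition $G=P_1\times\cdots\times P_k$ of the finite nilpotent group $G$, together with two elementary coordinatewise criteria: writing $x=(x_1,\dots,x_k)$ and $y=(y_1,\dots,y_k)$, we have $[x,y]\ne 1$ if and only if $[x_i,y_i]\ne 1$ for some $i$, and $\langle x,y\rangle=G$ if and only if $\langle x_i,y_i\rangle=P_i$ for every $i$ (the latter because each subgroup of a nilpotent group is the direct product of its Sylow subgroups). Thus $x\notin Z(G)$ precisely when $x_i\notin Z(P_i)$ for some $i$, which forces that $P_i$ to be non-abelian. Case (i) is immediate, since $\nc(G)$ is empty exactly when $G$ is abelian. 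For $G$ non-abelian, at least one Sylow subgroup is non-abelian and hence non-cyclic, and I would split on the number of non-cyclic Sylow subgroups: two or more gives case (ii), while exactly one forces the remaining Sylow subgroups to be cyclic (so their product $H$ is cyclic) and forces that one subgroup $P$ to be non-abelian, giving case (iii) with $G=P\times H$.

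For case (ii), fix non-cyclic $P_1,P_2$. The crucial observation is that setting a single non-cyclic coordinate of a putative neighbour to the identity destroys generation with everyone: if $z_c=1$ and $P_c$ is non-cyclic then $\langle z_c,w_c\rangle=\langle w_c\rangle\ne P_c$, so $\langle z,w\rangle\ne G$. To show no vertex is isolated, given non-central $x$ I pick a coordinate $a$ with $x_a\notin Z(P_a)$ (so $P_a$ is non-cyclic) and a second non-cyclic coordinate $c\ne a$, and take $z$ to be the identity except that $z_a$ does not commute with $x_a$ and $z_c=1$; then $z$ is adjacent to $x$. For diameter $2$ I would produce, for non-central $x,y$, a common neighbour $z$: choosing a non-commuting coordinate $a$ for $x$ and $b$ for $y$ (using that a group is never the union of two proper centralisers to handle $a=b$), and then breaking generation by setting a free non-cyclic coordinate to the identity. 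Since $Z(G)\ne 1$, the vertices $x$ and $xt$ with $1\ne t\in Z(G)$ commute and are distinct, so the graph is not complete and the diameter is exactly $2$.

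For case (iii), write $x=(g,h)$ with $g\in P$ and $h\in H$; then $[x,y]\ne 1$ iff $[g,g']\ne 1$, and $\langle x,y\rangle\ne G$ iff $\langle g,g'\rangle\ne P$ or $\langle h,h'\rangle\ne H$. I would first record that if $g,g'$ are adjacent in $\nc(P)$ then $(g,h),(g',h')$ are adjacent in $\nc(G)$ for all $h,h'$, since non-generation is inherited from $P$. This yields the isolated-vertex characterisation: $(g,h)$ is isolated in $\nc(G)$ if and only if $g$ is isolated in $\nc(P)$ and $h$ generates $H$ (if $g$ has a $\nc(P)$-neighbour the vertex survives; if $g$ is isolated in $\nc(P)$ then every non-commuting partner generates $P$ with $g$, so a neighbour exists exactly when some $h'$ fails to generate $H$ with $h$, i.e.\ when $h$ is not a generator). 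In case (iii)(a), where $\nc(P)$ has no isolated vertices, Theorem~\ref{thm:pgpnc} gives $\diam(\nc(P))\in\{2,3\}$; lifting a geodesic of $\nc(P)$ coordinatewise (with arbitrary $H$-parts) yields $\diam(\nc(G))\le\diam(\nc(P))$, while commuting pairs $(g,h),(g,h')$ show the graph is not complete, giving $\diam(\nc(G))\ge 2$. When $\diam(\nc(P))=3$ I would take $g,g'$ at distance $3$ in $\nc(P)$ and $h,h'$ generators of $H$: any common neighbour $(z,k)$ of $(g,h),(g',h')$ would satisfy $\langle k,h\rangle=\langle k,h'\rangle=H$, forcing $\langle z,g\rangle\ne P$ and $\langle z,g'\rangle\ne P$, so $z$ would be a common $\nc(P)$-neighbour of $g,g'$, a contradiction; hence $\diam(\nc(G))\ge 3$. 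In case (iii)(b) the isolated vertices are as described, and for $\nd(G)$ I would bound distances by splitting vertices into those with $g$ non-isolated in $\nc(P)$ and those with $g$ isolated (where necessarily $h$ is a non-generator): the first kind lift $\nc(P)$-geodesics, while two vertices of the second kind commute but share the common neighbour $(w,1)$ for any $w\notin C_P(g)\cup C_P(g')$, giving diameter $2$.

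The main obstacle, in both case (ii) and the mixed pairs of case (iii)(b), is the residual configuration in which the coordinates available for breaking non-commutativity and for breaking generation collide --- for instance when $x$ and $y$ are ``concentrated'' in the only two non-cyclic Sylow factors, or when the $\nc(P)$-neighbours of a non-isolated $g$ all lie inside the centraliser of an isolated $g'$. Resolving these will require the finer structure of $\nc(P)$ supplied by the $p$-group analysis behind Theorem~\ref{thm:pgpnc}, in particular the location of the isolated vertices inside an abelian maximal subgroup and control of centralisers modulo $\Phi(P)$, and this is exactly where the delicate work lies.
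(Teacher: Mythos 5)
Your overall architecture (the trichotomy and the reduction to the Sylow decomposition) matches the paper's, but your execution is genuinely different and in places cleaner. The paper routes case (ii) through a general lemma on $\nc(A\times B)$ with $A$ non-abelian and $B$ non-cyclic (Lemma \ref{lem:direc2gen}, applied after regrouping the Sylow factors), gets the upper bound in (iii)(a) from Proposition \ref{prop:direccyclic}, and characterises isolated vertices via Goursat's classification of maximal subgroups of a direct product (Corollary \ref{cor:direcisol}). Your coprime-coordinate identity $\langle x,y\rangle=\langle g,g'\rangle\times\langle h,h'\rangle$ is a correct and more elementary substitute for the latter, and your lower-bound argument in (iii)(a) --- a common neighbour of $(g,h)$ and $(g',h')$ with $h,h'$ generators of $H$ would project to a common $\nc(P)$-neighbour of $g$ and $g'$ --- is slicker than the paper's explicit construction of a distance-$3$ pair via Theorem \ref{thm:pgpnc} and Lemma \ref{lem:ncmaxnorm}.

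That said, the two configurations you defer are genuine gaps as written, and your diagnosis of what closes them is only half right. In case (ii), when $x$ and $y$ are non-central only in distinct coordinates $a\ne b$ and these are the only non-cyclic Sylow factors, no $p$-group theory is needed: if $y_a\in Z(P_a)$ and $x_b\in Z(P_b)$, take $z$ supported on $\{a,b\}$ with $z_a\notin C_{P_a}(x_a)$ and $z_b\notin C_{P_b}(y_b)$; then $\langle z_a,y_a\rangle$ and $\langle z_b,x_b\rangle$ each have a central generator, hence are abelian and proper in the non-abelian groups $P_a$ and $P_b$ (non-abelian because $x_a,y_b$ are non-central there), so $z$ is a common neighbour. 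If instead $y_a\notin Z(P_a)$ or $x_b\notin Z(P_b)$, you are back in your one-coordinate case. This is exactly Case (c) of the paper's Lemma \ref{lem:direc2gen}. In case (iii)(b), the mixed pairs can indeed be finished with the structure you point to: an isolated $g'$ of $\nc(P)$ has $C_P(g')=M$ for an abelian maximal subgroup $M$ (Propositions \ref{prop:isolvertnc} and \ref{prop:normisol}), a non-isolated $g$ is non-central in some maximal $L\ne M$, and any $w\in L\setminus(C_L(g)\cup(L\cap M))$ makes $(w,1)$ a common neighbour since $\langle w,g\rangle\le L<P$. But the paper sidesteps all of this: $\nc(G)$ has isolated vertices, so $\nc(G)\ne\nd(G)$, and Theorem \ref{thm:nilpncsummary} then forces $\diam(\nd(G))=2$ outright; the same appeal also covers your ``first kind'' pairs, whose lifted-geodesic bound otherwise needs $\diam(\nd(P))=2$ from Theorem \ref{thm:pgpnc}(iii)(a). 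Finally, two vertices of your ``second kind'' need not commute (take $P$ minimal non-abelian), though your common neighbour $(w,1)$ works regardless.
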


Note that the group $P$ in case (iii)(b) of the above theorem may be minimal non-abelian.

The structure of this paper is as follows. In \S\ref{sec:prelim}, we prove several preliminary results about the graph $\nc(G)$. Next, in \S\ref{sec:normalmax}, we derive useful properties of groups that contain a normal maximal subgroup. This section concludes with a proof of Theorem \ref{thm:nilpncsummary}. We then prove Theorem \ref{thm:pgpnc} in \S\ref{sec:pgroups}. Finally, in \S\ref{sec:dirprods}, we investigate the structure of $\nc(G)$ when $G$ is a direct product of groups, and in particular prove Theorem \ref{thm:nilpgpnc}.

\section{Preliminary results}
\label{sec:prelim}

We now prove several elementary but useful results about the non-commuting, non-generating graph of an arbitrary group $G$. Note that if $x$ and $y$ are vertices of a graph, then we write $d(x,y)$ to denote the distance in the graph between $x$ and $y$.

\begin{prop}
\label{prop:nodiamonenc}
No connected component of $\nc(G)$ has diameter $1$.
\end{prop}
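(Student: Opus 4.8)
The plan is to observe that a connected component of diameter $1$ is precisely a complete graph on at least two vertices, whereas a component containing no edge at all is a single isolated vertex (diameter $0$). Hence it suffices to prove that every connected component of $\nc(G)$ that contains an edge in fact contains two \emph{non-adjacent} vertices, so that its diameter is at least $2$. My main tool will be the elementary observation that a non-central element and its inverse behave as non-adjacent ``twins'' in $\nc(G)$.

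First I would record this twin property. If $v \in G \setminus Z(G)$ has order greater than $2$, then $v$ and $v^{-1}$ are distinct, they commute (so are non-adjacent), and they have exactly the same neighbours in $\nc(G)$, since $C_G(v) = C_G(v^{-1})$ and $\langle v, w \rangle = \langle v^{-1}, w \rangle$ for every $w$. Consequently, if such a $v$ is non-isolated, then $v$ and $v^{-1}$ lie in the same component and are joined by a path of length $2$ through any common neighbour, forcing that component to have diameter at least $2$. It therefore remains only to locate, inside any component containing an edge, a non-isolated vertex of order greater than $2$.

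To do this I would take an edge $x \sim y$. If $x$ or $y$ has order greater than $2$, the twin property applies immediately. The main obstacle is the case where $x$ and $y$ are both involutions, since then the twin trick is unavailable for these two vertices; I expect this to be the only genuinely delicate point. To resolve it, I would pass to the product $xy$: one checks that $\langle x, xy \rangle = \langle x, y \rangle \neq G$ and that $[x, xy] \neq 1$ (as $x(xy) = (xy)x$ would give $xy = yx$), so $xy$ is a non-central vertex adjacent to $x$, hence non-isolated and in the same component. Finally $xy$ has order greater than $2$, because $(xy)^2 = 1$ together with $x^2 = y^2 = 1$ would force $xy = yx$, contradicting the edge $x \sim y$. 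Applying the twin property to $xy$ then completes the argument.
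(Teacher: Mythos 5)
Your proposal is correct and uses exactly the same two ingredients as the paper's proof: the observation that a non-central element and its inverse are non-adjacent twins (forcing any diameter-$1$ component to consist of involutions), and the computation that for an edge $\{x,y\}$ of involutions the product $xy$ is an adjacent vertex whose order exceeds $2$. The paper phrases this as a direct contradiction rather than your contrapositive arrangement, but the argument is essentially identical.
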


\begin{proof}
Suppose, for a contradiction, that $\nc(G)$ has a connected component $X$ of diameter $1$. Observe that an element of $G \setminus Z(G)$ and its inverse have the same set of neighbours in $\nc(G)$, and are not adjacent. This implies that $|h| = 2$ for each $h \in X$.

Let $x, y \in X$. Then $\{x,y\}$ is an edge of $\nc(G)$, and it follows easily that $\{x,xy\}$ is also an edge. Hence $xy \in X$. Thus $x$, $y$ and $xy$ all have order $2$, and we conclude that $[x,y] = (xy)^2 = 1$. Therefore, $\{x,y\}$ is not an edge of $\nc(G)$, a contradiction.
\end{proof}

In order to deduce additional properties of $\nc(G)$, we will briefly consider the \emph{non-commuting graph} of $G$. This graph has vertex set $G \setminus Z(G)$, with two vertices adjacent if and only if they do not commute. Thus the graph is the complement of the commuting graph (with central elements removed). Abdollahi, Akbari and Maimani proved the following result, using similar arguments to those in the proof of Proposition \ref{prop:nodiamonenc} above, as well as the fact that the union of two proper subgroups of $G$ is a proper subset of $G$.

\begin{prop}[{\cite[Proposition 2.1]{abdollahi}}]
\label{prop:noncomdiam}
Suppose that $G$ is non-abelian. Then the non-commuting graph of $G$ is connected with diameter $2$.
\end{prop}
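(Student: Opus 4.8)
The plan is to prove the two inequalities $\diam \le 2$ and $\diam \ge 2$ separately, writing $\Delta$ for the non-commuting graph of $G$ and recalling that its vertices are the non-central elements, with $x \sim y$ exactly when $[x,y] \ne 1$. Since $G$ is non-abelian, $\Delta$ has at least one edge, so it has vertices and its diameter is well defined.

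For the upper bound, I would take arbitrary vertices $x,y$ and show $d(x,y) \le 2$. If $[x,y]\ne 1$ they are adjacent, so I may assume $[x,y]=1$. As $x$ and $y$ are non-central, their centralisers $C_G(x)$ and $C_G(y)$ are proper subgroups of $G$. Invoking the stated fact that $G$ is not the union of two proper subgroups, I choose $z \in G \setminus (C_G(x)\cup C_G(y))$. Then $z$ commutes with neither $x$ nor $y$; in particular $z$ is non-central, hence a vertex, and it is adjacent to both $x$ and $y$. Thus $d(x,y)=2$. This single argument shows simultaneously that $\Delta$ is connected and that $\diam(\Delta)\le 2$.

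For the lower bound I would show that $\Delta$ is not complete, i.e.\ that it has two commuting (non-adjacent) vertices, by reusing the order-$2$ reasoning of Proposition \ref{prop:nodiamonenc}. Suppose for contradiction that every pair of distinct vertices is adjacent. Since $x$ and $x^{-1}$ always commute, each non-central $x$ must satisfy $x=x^{-1}$, so every vertex has order $2$. Now fix distinct vertices $x,y$; as $y=y^{-1}$ and $x \ne y$, the product $xy$ is nontrivial. If $xy \in Z(G)$, then $y = x^{-1}(xy) \in x\,Z(G)$, and since $x$ commutes with both $x^{-1}$ and the central element $xy$, it commutes with $y$. If instead $xy$ is a vertex, then $xy$ also has order $2$, so $(xy)^2=1$ gives $xy=(xy)^{-1}=y^{-1}x^{-1}=yx$, again forcing $[x,y]=1$. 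Either way $x$ and $y$ commute, contradicting adjacency. Hence $\Delta$ has non-adjacent vertices and $\diam(\Delta)\ge 2$, so $\diam(\Delta)=2$.

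I expect the main obstacle to be the degenerate case in the lower bound where every non-central element has order $2$: there one must track carefully whether the product $xy$ lands in $Z(G)$ or is itself a vertex, which is precisely where the order-$2$ manipulation from Proposition \ref{prop:nodiamonenc} is needed. By comparison, the upper bound is a clean application of the union-of-two-subgroups fact to the centralisers $C_G(x)$ and $C_G(y)$, with the only subtlety being the observation that the chosen element $z$ is automatically non-central and therefore a legitimate vertex.
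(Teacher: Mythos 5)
Your proof is correct, and it follows exactly the route the paper indicates: the paper does not reprove this result but cites Abdollahi--Akbari--Maimani, noting that the argument combines the fact that a group is not the union of two proper subgroups (your upper bound via centralisers) with the order-$2$ manipulations of Proposition \ref{prop:nodiamonenc} (your lower bound ruling out a complete graph). Nothing further is needed.
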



We are now able to derive two useful corollaries involving $\nc(G)$.

\begin{cor}
\label{cor:propernc}
Let $H$ be a proper non-abelian subgroup of $G$. Then the induced subgraph of $\nc(G)$ corresponding to $H \setminus Z(H)$ is connected with diameter $2$.
\end{cor}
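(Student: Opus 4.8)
The plan is to reduce the induced subgraph in question to the non-commuting graph of $H$, and then invoke Proposition \ref{prop:noncomdiam} directly. The first thing I would check is that the proposed vertices really are vertices of $\nc(G)$, i.e.\ that $H \setminus Z(H) \subseteq G \setminus Z(G)$. This follows from the observation that $Z(G) \cap H \subseteq Z(H)$: any $x \in Z(G) \cap H$ commutes with every element of $G$, hence with every element of $H$, and since $x \in H$ this gives $x \in Z(H)$. Thus no element of $H \setminus Z(H)$ is central in $G$, and the induced subgraph is well defined.

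The crux of the argument is the following elementary observation about the adjacency condition. For any two vertices $x, y \in H \setminus Z(H)$, we have $\langle x, y \rangle \leq H$, and since $H$ is a \emph{proper} subgroup of $G$, this forces $\langle x, y \rangle \neq G$. Hence the generating condition in the definition of $\nc(G)$ is automatically satisfied for every such pair, and $x$ and $y$ are adjacent in the induced subgraph if and only if $[x,y] \neq 1$. In other words, the induced subgraph of $\nc(G)$ on $H \setminus Z(H)$ coincides exactly with the non-commuting graph of $H$.

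With this identification in hand, the conclusion is immediate: since $H$ is non-abelian, Proposition \ref{prop:noncomdiam} tells us that its non-commuting graph is connected with diameter $2$, which is precisely the desired statement. There is no serious obstacle here; the entire content lies in the single observation that the properness of $H$ renders the non-generating condition vacuous for pairs drawn from $H$, after which the result is a direct translation of the cited proposition.
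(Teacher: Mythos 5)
Your proposal is correct and follows exactly the paper's argument: the properness of $H$ makes the non-generating condition automatic, so the induced subgraph is the non-commuting graph of $H$, and Proposition \ref{prop:noncomdiam} finishes the job. The extra check that $H \setminus Z(H) \subseteq G \setminus Z(G)$ is a nice touch that the paper leaves implicit.
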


\begin{proof}
Since any two elements of $H$ generate a proper subgroup of $G$, the induced subgraph of $\nc(G)$ corresponding to $H \setminus Z(H)$ is the non-commuting graph of $H$. Hence Proposition \ref{prop:noncomdiam} yields the result.
\end{proof}

Next, notice that if $G$ is not $2$-generated, then $\nc(G)$ is equal to the non-commuting graph of $G$. We therefore obtain the following corollary of Proposition \ref{prop:noncomdiam}.

\begin{cor}
\label{cor:highgengp}
Suppose that $G$ is non-abelian and not $2$-generated. Then $\nc(G)$ is connected with diameter $2$.
\end{cor}

Hence to prove our main theorems, we need only consider non-abelian $2$-generated groups that are not minimal non-abelian.

\begin{rem}
Suppose that $G$ is finite, non-abelian and nilpotent. The non-commuting, non-generating graph of the direct product of $G$ and a finite cyclic group is described in Theorem \ref{thm:nilpgpnc}. We can also determine the structure of $\nc(G \times \mathbb{Z})$. Observe that $G$ contains a non-cyclic Sylow subgroup $P$. Let $k$ be the smallest size of a generating set for $P$. Then $k > 1$, and Burnside's Basis Theorem implies that $P/\Phi(P)$ is isomorphic to the elementary abelian group $C_p^k$ for some prime $p$. As $C_p^{k+1}$ is a quotient of $P/\Phi(P) \times \mathbb{Z}$, which is a quotient of $G \times \mathbb{Z}$, we conclude that the smallest size of a generating set for $G \times \mathbb{Z}$ is at least $k+1 > 2$. Hence Corollary \ref{cor:highgengp} shows that $\nc(G \times \mathbb{Z})$ is connected with diameter $2$.
\end{rem}

The final result in this section generalises the observation that every vertex of $\nc(G)$ is isolated if and only if $G$ is minimal non-abelian. 

\begin{prop}
\label{prop:isolvertnc}
Suppose that $G$ is non-abelian and $2$-generated, and let $g \in G \setminus Z(G)$. Then $g$ is an isolated vertex of $\nc(G)$ if and only if:
\begin{enumerate}[label={(\roman*)},font=\upshape]
\item there exists a unique maximal subgroup $M$ of $G$ containing $g$; and
\item $g \in Z(M)$.
\end{enumerate}
Moreover, if $g$ is not isolated in $\nc(G)$, then there exists a maximal subgroup $L$ of $G$ such that $g \in L \setminus Z(L)$.
\end{prop}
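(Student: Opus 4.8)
The plan is to reduce everything to a single structural fact about finitely generated groups and then read off both the equivalence and the ``moreover'' clause from it. The \emph{key lemma} is that, since $G$ is $2$-generated, every proper subgroup of $G$ is contained in a maximal subgroup. I would prove this with Zorn's Lemma: given a proper subgroup $H$, order the proper subgroups containing $H$ by inclusion; the union of a chain is again a subgroup, and it is proper because if it equalled $G$ then the two generators of $G$ would both lie in a single member of the chain, making that member equal to $G$, a contradiction. A maximal element of this poset is then a maximal subgroup containing $H$. This lemma is exactly what lets the argument survive in the infinite case, and I expect it to be the crux: without finite generation a proper subgroup such as $\langle g,h\rangle$ need not lie in any maximal subgroup, and the equivalence could break down.

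For the forward direction, suppose $g$ is isolated. First I would note $\langle g\rangle \ne G$ (as $G$ is non-abelian, hence non-cyclic), so by the key lemma $g$ lies in some maximal subgroup $M$; this gives the existence half of (i). To get (ii), and a little more, I claim $g \in Z(M')$ for \emph{every} maximal subgroup $M'$ containing $g$: for any $h \in M'$ we have $\langle g,h\rangle \le M' \ne G$, so if $[g,h]\ne 1$ then $h \in G\setminus Z(G)$ is a neighbour of $g$ in $\nc(G)$, contradicting isolation; hence $[g,h]=1$ for all $h \in M'$. For uniqueness, suppose $M_1 \ne M_2$ are maximal subgroups containing $g$. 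Then $M_1, M_2 \subseteq C_G(g)$, so $C_G(g) \supseteq \langle M_1, M_2\rangle = G$ (the join properly contains the maximal subgroup $M_1$, since distinct maximal subgroups are incomparable), forcing $g \in Z(G)$, a contradiction. This yields (i) and (ii).

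For the converse, assume (i) and (ii) hold, with $M$ the unique maximal subgroup containing $g$. Take any $h \in G\setminus Z(G)$ with $[g,h]\ne 1$; I must show $\langle g,h\rangle = G$. If not, then $\langle g,h\rangle$ is proper and, by the key lemma, lies in some maximal subgroup $M'$; since $g \in M'$, uniqueness forces $M' = M$, whence $h \in M$ and so $[g,h]=1$ by (ii), a contradiction. Thus $g$ has no neighbour and is isolated.

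Finally, for the ``moreover'' clause, suppose $g$ is not isolated and pick a neighbour $h$, so $[g,h]\ne 1$ and $\langle g,h\rangle \ne G$. Applying the key lemma to $\langle g,h\rangle$ gives a maximal subgroup $L$ containing both $g$ and $h$; since $h \in L$ does not commute with $g$, we have $g \in L\setminus Z(L)$, as required.
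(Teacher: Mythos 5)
Your proof is correct and follows essentially the same route as the paper: both rest on the fact that every proper subgroup of a finitely generated group lies in a maximal subgroup, combined with the observation that a non-central element is centralised by at most one maximal subgroup (your $\langle M_1,M_2\rangle = G$ argument is just a rephrasing of this). The only difference is presentational — you spell out the Zorn's Lemma step and separate the two directions, where the paper compresses everything into one biconditional chain.
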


\begin{proof}
An element $g \in G \setminus Z(G)$ is isolated in $\nc(G)$ if and only if $[g,x] = 1$ for each $x \in G$ with $\langle g, x \rangle < G$. As $G$ is finitely generated, each of its proper subgroups lies in some maximal subgroup. Thus $g$ is isolated if and only if each maximal subgroup containing $g$ also centralises $g$. Since $g$ is centralised by at most one maximal subgroup, either (i) and (ii) both hold (and $g$ is isolated) or $g$ is non-central in at least one maximal subgroup (and is not isolated).
%
%
%
\end{proof}

Note that if $G$ is finitely generated but not $2$-generated, then each element of $G$ lies in more than one maximal subgroup.

\section{Groups with normal maximal subgroups}
\label{sec:normalmax}

In this section, we derive some of the most important tools that we will use to prove our main theorems. We will conclude the section with a proof of Theorem \ref{thm:nilpncsummary}.


\begin{prop}
\label{prop:noabelmax}
Suppose that $G$ contains a normal non-abelian maximal subgroup $M$, with $Z(G) < Z(M)$. Then each maximal subgroup of $G$ is non-abelian.
\end{prop}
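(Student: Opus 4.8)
The plan is to argue by contradiction: suppose that $G$ has an abelian maximal subgroup $A$. Since $M$ is non-abelian we have $A \neq M$, and as distinct maximal subgroups with $M \trianglelefteq G$ this forces $MA = G$. Because $M$ is a normal maximal subgroup, $G/M$ has no proper nontrivial subgroups and is therefore cyclic of prime order $p$ (this holds even when $G$ is infinite, since a group with no proper nontrivial subgroups must be finite cyclic of prime order). Writing $N := A \cap M$, the second isomorphism theorem gives $A/N \cong G/M \cong C_p$, so $N$ is a maximal subgroup of $A$ of index $p$. I would first record the easy fact that $Z(G) \leq M$: otherwise $MZ(G) = G$, and since $Z(G)$ is central this forces every element of $Z(M)$ to commute with all of $G$, giving $Z(M) \leq Z(G)$ and contradicting $Z(G) < Z(M)$. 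In particular $Z(G) \leq Z(M)$.

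The crucial step is to show that $N \leq Z(G)$. Fix $z \in Z(M) \setminus Z(G)$. Since $A$ is abelian and $N \leq A$, every element of $A$ centralises $N$; and since $z \in Z(M)$ and $N \leq M$, the element $z$ also centralises $N$. Now $z \notin A$: if $z \in A$ then $z \in A \cap M = N$, whence $z$ would centralise both $A$ (as $A$ is abelian) and $M$, giving $z \in Z(MA) = Z(G)$, a contradiction. As $A$ is maximal and $z \notin A$, we have $\langle A, z\rangle = G$, and since $A$ and $z$ both centralise $N$ this yields $G = C_G(N)$, i.e.\ $N \leq Z(G)$. This is the heart of the argument, and I expect verifying $z \notin A$ and correctly deducing $\langle A,z\rangle \leq C_G(N)$ to be the only delicate points.

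Finally I would derive a contradiction by passing to $\overline{G} := G/N$, which is legitimate since $N \leq Z(G) \trianglelefteq G$. Here $\overline{M} = M/N$ is normal in $\overline{G}$, the image $\overline{A} = A/N \cong C_p$ is still a maximal subgroup, and $\overline{A} \cap \overline{M} = 1$, so $\overline{G} = \overline{M} \rtimes \overline{A}$. Because $N \leq Z(G) \leq Z(M)$, the subgroup $Z(M)/N$ of $\overline{M}$ is well defined; it is normal in $\overline{G}$ (as $Z(M) \trianglelefteq G$), nontrivial (as $N \leq Z(G) < Z(M)$), and proper in $\overline{M}$ (as $M$ is non-abelian). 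Then $\overline{H} := \overline{A}\,(Z(M)/N)$ satisfies $\overline{A} < \overline{H} < \overline{G}$: strictly on the left since $Z(M)/N \neq 1$ meets $\overline{A}$ trivially, and strictly on the right since $\overline{H} \cap \overline{M} = Z(M)/N \neq \overline{M}$ by Dedekind's modular law. This contradicts the maximality of $\overline{A}$, completing the proof. The main obstacle is the middle step; once $N \leq Z(G)$ is established, the characteristic subgroup $Z(M)/N$ furnishes the forbidden intermediate subgroup almost immediately.
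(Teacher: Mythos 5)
Your proof is correct. The opening is essentially the paper's: both arguments fix an element $z \in Z(M)\setminus Z(G)$, show $z$ cannot lie in the abelian maximal subgroup, and use this to force $A\cap M$ into $Z(G)$ (the paper shows $L\cap M = Z(G)$ by noting that a non-central $y \in L\cap M$ would give $Z(M)\le C_G(y)=L$; you get $A\cap M\le Z(G)$ from $G=\langle A,z\rangle\le C_G(A\cap M)$ -- the two observations are interchangeable). The endgames genuinely differ. The paper notes $LZ(M)=G$ and applies the second isomorphism theorem to get $G/Z(M)\cong L/(L\cap M)\cong G/M$, which is simple, contradicting the fact that $M/Z(M)$ is a nontrivial proper normal subgroup of $G/Z(M)$. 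You instead pass to $G/N$ and exhibit $AZ(M)/N$ as a subgroup strictly between $A/N$ and $G/N$ via Dedekind's modular law, contradicting maximality of $A$. Both closings exploit the same structural fact, namely that $Z(M)$ is a normal subgroup of $G$ wedged strictly between $A\cap M$ and $M$, and neither requires finiteness, so the proofs have equal scope; the paper's is a touch shorter, while yours makes the violated maximality explicit and avoids reasoning about simplicity of quotients. Two small remarks: the identification $G/M\cong C_p$ and the semidirect decomposition $\overline G=\overline M\rtimes\overline A$ are correct but not actually needed for your contradiction; and since $AZ(G)=G$ would force $G$ abelian, one also has $Z(G)\le A$, whence $N=Z(G)$ exactly, recovering the paper's identity $L\cap M=Z(G)$.
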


\begin{proof}
Assume, for a contradiction, that $G$ contains an abelian maximal subgroup $L$. It is clear that $L \cap M$ contains $Z(G)$. Let $x \in Z(M) \setminus Z(G)$. Then $M = C_G(x)$, and so $x \notin L$. Thus there is no element $y \in (L \cap M) \setminus Z(G)$, as otherwise $Z(M)$ would lie in $C_G(y) = L$. Therefore, $L \cap M$ is equal to $Z(G) = L \cap Z(M)$. It follows that $$G/Z(M) = LZ(M)/Z(M) \cong L/(L \cap Z(M)) = L/(L \cap M) \cong LM/M = G/M.$$ This is a contradiction, as $G/M$ is simple, while $G/Z(M)$ is not.
\end{proof}

Our next result, together with Proposition \ref{prop:isolvertnc}, allows us to better characterise isolated vertices of $\nc(G)$ in the case where every maximal subgroup of $G$ is normal.

\begin{prop}
\label{prop:normisol}
Suppose that $G$ is $2$-generated, and let $g$ be an element of $G$ that lies in a unique maximal subgroup $M$ of $G$. Assume in addition that $M \trianglelefteq G$ and $g \in Z(M)$. Then $M$ is abelian.
\end{prop}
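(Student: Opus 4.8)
The plan is to exhibit $M$ as a product of an abelian subgroup lying in its centre with a cyclic subgroup, from which commutativity follows immediately.

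First I would fix some $x \in G \setminus M$, which exists as $M$ is proper. Since $G$ is $2$-generated and hence finitely generated, every proper subgroup lies in a maximal subgroup, exactly as observed in the proof of Proposition~\ref{prop:isolvertnc}. As $M$ is the unique maximal subgroup containing $g$, while $x \notin M$, the subgroup $\langle g, x\rangle$ cannot be proper, so $\langle g, x\rangle = G$.

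Next I would control the conjugates of $g$. Because $M \trianglelefteq G$, conjugation by any $t \in G$ restricts to an automorphism of $M$, which must fix the characteristic subgroup $Z(M)$ setwise; as $g \in Z(M)$, every conjugate $t^{-1}gt$ therefore lies in $Z(M)$. Writing $N$ for the normal closure of $\langle g\rangle$ in $G$, this gives $N \le Z(M)$, so $N$ is abelian and central in $M$. Since $N \trianglelefteq G$ contains $g$, we obtain $G = \langle g, x\rangle \le N\langle x\rangle \le G$, whence $G = N\langle x\rangle$.

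Finally I would apply Dedekind's modular law: as $N \le M$, it yields $M = M \cap N\langle x\rangle = N\,(M \cap \langle x\rangle)$, and the factor $M \cap \langle x\rangle$ is a subgroup of the cyclic group $\langle x\rangle$, hence abelian. Thus $M = N\,(M \cap \langle x\rangle)$ is the product of $N \le Z(M)$ with an abelian subgroup, so $M$ is abelian, as required. I expect the only real subtlety to be the step showing that the entire normal closure of $g$, and not merely $g$ itself, is trapped inside $Z(M)$, since this is precisely what makes $N$ at once normal in $G$ and central in $M$; the remainder is a routine application of the modular law, and notably requires neither the simplicity of $G/M$ nor any finiteness assumption on $G$.
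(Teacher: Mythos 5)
Your proof is correct, but it takes a different mechanical route from the paper's. The paper's argument is shorter: from $\langle g,h\rangle=G$ with $h\notin M$ and $g\in Z(M)$ it gets $\langle Z(M),h\rangle=G$, notes that $Z(M)$ is normal in $G$ (being characteristic in the normal subgroup $M$), so $G/Z(M)$ is cyclic, hence so is its subgroup $M/Z(M)$, and then invokes the standard fact that a group with cyclic central quotient is abelian. You instead trap the normal closure $N$ of $\langle g\rangle$ inside $Z(M)$, observe $G=N\langle x\rangle$, and apply Dedekind's modular law to write $M=N\,(M\cap\langle x\rangle)$ as a product of a central subgroup and a cyclic subgroup. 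The two arguments are really the same idea in different clothing --- your decomposition $M=N\,(M\cap\langle x\rangle)$ with $N\le Z(M)$ is precisely what makes $M/Z(M)$ cyclic --- but yours replaces the ``central quotient cyclic implies abelian'' lemma with the modular law, at the cost of a few extra steps (forming $N$, checking $N\langle x\rangle$ is a subgroup because $N\trianglelefteq G$). Both versions correctly use the two essential inputs: uniqueness of $M$ forces $\langle g,x\rangle=G$ for $x\notin M$, and normality of $M$ makes $Z(M)$ normal in $G$; and both, as you note, work without any finiteness hypothesis beyond $G$ being finitely generated.
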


\begin{proof}
Let $h \in G \setminus M$. Then $\langle g, h \rangle$ lies in no maximal subgroup of $G$, and hence in no proper subgroup, since $G$ is finitely generated. It follows that $\langle Z(M), h \rangle = G$, and so $G/Z(M)$ is cyclic. Thus $M/Z(M)$ is also cyclic, and therefore $M$ is abelian.
\end{proof}

Our next result provides upper bounds (and in some cases exact values) for distances between certain non-isolated vertices of $\nc(G)$, and in fact between any two non-isolated vertices when every maximal subgroup of $G$ is normal.

\begin{lem}
\label{lem:ncmaxnorm}
Suppose that $G$ is $2$-generated. In addition, let $(x,L,y,M)$ be an ordered $4$-tuple such that $L$ and $M$ are normal, non-abelian maximal subgroups of $G$, with $x \in L \setminus Z(L)$ and $y \in M \setminus Z(M)$. Then $d(x,y) \le 3$. Moreover, $d(x,y) = 3$ if and only if either:
\begin{enumerate}[label={(\roman*)},font=\upshape]
\item $x \in Z(M)$, $y \notin L$, and $M$ is the only maximal subgroup of $G$ containing but not centralising $y$; or
\item $y \in Z(L)$, $x \notin M$, and $L$ is the only maximal subgroup of $G$ containing but not centralising $x$.
\end{enumerate}
\end{lem}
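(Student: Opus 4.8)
The plan is to combine the diameter-$2$ statement for proper non-abelian subgroups (Corollary \ref{cor:propernc}) with a careful analysis of the maximal subgroups containing $x$ and $y$. First I would dispose of the easy configurations. If $L=M$, then $x,y\in L\setminus Z(L)$ and Corollary \ref{cor:propernc} gives $d(x,y)\le 2$; moreover neither (i) nor (ii) can hold, as each would force $x\in Z(M)=Z(L)$ or $y\in Z(L)$. So assume $L\ne M$. If $x\in M\setminus Z(M)$, then $x,y\in M\setminus Z(M)$ and again $d(x,y)\le 2$; symmetrically if $y\in L\setminus Z(L)$. Hence I may assume that $x\in Z(M)$ or $x\notin M$, and that $y\in Z(L)$ or $y\notin L$. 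Since $L,M$ are distinct normal maximal subgroups, $LM=G$ and $N:=L\cap M\trianglelefteq G$; as $G/L$ and $G/M$ are simple, Goursat's lemma together with $L\ne M$ forces $G/N\cong G/L\times G/M$, with $L/N$ and $M/N$ the two direct factors (so $M/N\cong G/L$). This decomposition, together with $x\in L$ and $y\in M$, is the structural input I would use throughout.

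The second step is to translate distances into statements about centralisers. For a non-isolated vertex $v$, write $\mathcal{M}(v)$ for the set of maximal subgroups $K$ of $G$ with $v\in K\setminus Z(K)$; this set is non-empty by Proposition \ref{prop:isolvertnc}, and it contains $L$ for $v=x$ and $M$ for $v=y$. A vertex $w$ is a common neighbour of $x$ and $y$ exactly when $w\in(A\cap B)\setminus(C_G(x)\cup C_G(y))$ for some $A\in\mathcal{M}(x)$ and $B\in\mathcal{M}(y)$. Because no group is the union of two proper subgroups, $A\cap B\subseteq C_G(x)\cup C_G(y)$ forces $A\cap B\subseteq C_G(x)$ or $A\cap B\subseteq C_G(y)$. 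Applying this with $A=L$ and $B=M$ shows that $d(x,y)\le 2$ whenever $N\not\subseteq C_G(x)$ and $N\not\subseteq C_G(y)$.

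For the bound $d(x,y)\le 3$, I would instead produce a neighbour of $x$ lying in $M\setminus Z(M)$: any $a\in N\setminus(C_G(x)\cup Z(M))$ satisfies $\langle x,a\rangle\le L<G$ and $[x,a]\ne 1$, so $x\sim a$, and then $d(a,y)\le 2$ inside $M$ by Corollary \ref{cor:propernc}, giving $d(x,y)\le 3$. Such an $a$ exists unless $N\subseteq C_G(x)$ or $N\subseteq Z(M)$, and a symmetric argument on the $M$-side covers the roles of $y$ and $L$. The residual configurations are precisely those in which these obstructions hold on both sides; here I would invoke the product decomposition, noting for instance that $N\subseteq Z(M)$ gives $N\subseteq C_G(y)$ and, when $G/L\cong M/N$ is cyclic, contradicts the non-abelianness of $M$ (as $M/Z(M)$ would then be cyclic). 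In the surviving degenerate cases the remaining direct-factor structure is used to exhibit the required length-$3$ path directly.

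Finally, for the characterisation of equality I would argue both directions. For the easy direction, suppose (i) holds. Then $x\in Z(M)$ gives $[x,y]=1$, so $x\not\sim y$; and any common neighbour $w$ would lie in some $B\in\mathcal{M}(y)=\{M\}$, whence $w\in M\subseteq C_G(x)$, contradicting $[w,x]\ne 1$. Thus $d(x,y)\ge 3$, and with the bound above $d(x,y)=3$; case (ii) is symmetric. For the converse, assume $d(x,y)\ge 3$. The common-neighbour criterion with $A=L$, $B=M$ gives $N\subseteq C_G(x)$ or $N\subseteq C_G(y)$; interchanging $(x,L)$ with $(y,M)$, I may assume the former and aim for (i). The main obstacle is to upgrade this one local condition into the global statement of (i): using $G/N\cong G/L\times G/M$ one shows $x\in Z(M)$, while running the no-common-neighbour criterion over \emph{every} pair $(A,B)\in\mathcal{M}(x)\times\mathcal{M}(y)$ must force every maximal subgroup that contains but does not centralise $y$ to equal $M$, and force $y\notin L$. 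This bookkeeping over all maximal subgroups in $\mathcal{M}(x)$ and $\mathcal{M}(y)$, which need not be normal and so lie outside the scope of the decomposition, rather than just over $L$ and $M$, is the crux of the argument.
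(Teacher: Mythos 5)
Your toolbox is close to the paper's (the intersection $N=L\cap M$ being maximal in both $L$ and $M$, the fact that no group is a union of two proper subgroups, and Corollary \ref{cor:propernc}), and your forward direction of the characterisation is correct. But there is one idea the proposal is missing, and it is precisely the idea on which both of your unresolved steps depend: \emph{if $M$ is a non-abelian normal maximal subgroup and $x\in Z(M)$, then $x$ is adjacent in $\nc(G)$ to every element of $G\setminus M$}. This holds because $G/Z(M)$ is non-cyclic (as $M/Z(M)$ is non-cyclic and $G/M$ is cyclic or simple of prime-power-free order --- in fact one only needs that $Z(M)\le\Phi(G)$ here, i.e.\ that no element of $Z(M)$ lies in a $2$-element generating set), so $\langle x,w\rangle\ne G$ for all $w$, while $C_G(x)=M$. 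Without this observation your ``residual configurations'' cannot be closed out. Concretely, the configuration $x\in Z(M)$, $y\in Z(L)$ is residual in your scheme (all three of your constructions are obstructed, since $N\le C_G(x)\cap C_G(y)$), yet here neither (i) nor (ii) holds, so you must produce a \emph{length-$2$} path, not the ``required length-$3$ path'' your sketch promises; the correct move is that $x$ is adjacent to all of $G\setminus M$, $y$ to all of $G\setminus L$, and $G\ne L\cup M$. Your appeal to $G/N\cong G/L\times G/M$ does not substitute for this: that decomposition is never needed (the paper uses only that $N$ is maximal in each of $L$ and $M$), and your dismissal of $N\subseteq Z(M)$ only works when $G/L$ is cyclic, which is not assumed.

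The same missing observation is what makes the converse of the equality characterisation work, which you yourself flag as ``the crux'' and leave as a plan. In the paper's argument, once one is in the configuration $x\in Z(M)$, $y\notin L$ and some maximal subgroup $K\ne M$ contains but does not centralise $y$, one picks $s\in K\setminus(M\cup C_G(y))$ (possible since $K\cap M$ and $C_K(y)$ are proper in $K$; note $K$ need not be normal, which your worry about $\mathcal{M}(x)$, $\mathcal{M}(y)$ correctly anticipates but does not resolve) and then concludes $x\sim s$ \emph{only} because $x\in Z(M)$ forces $\langle x,s\rangle\ne G$. Your plan of ``running the no-common-neighbour criterion over every pair $(A,B)\in\mathcal{M}(x)\times\mathcal{M}(y)$'' stalls exactly here: to certify that some $A\in\mathcal{M}(x)$ contains a given $s\notin M$ you must first know $\langle x,s\rangle\ne G$. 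So the proposal as written establishes the bound $d(x,y)\le 3$ outside the case $x\in Z(M),\ y\in Z(L)$, and one direction of the equivalence, but has a genuine gap in the remaining case and in the converse direction.
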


\begin{proof} As $G$ is finitely generated, each proper subgroup of $G$ lies in a maximal subgroup. We split the proof into several cases, corresponding to where $x$ lies with respect to $M$ and $Z(M)$, and where $y$ lies with respect to $L$ and $Z(L)$.

\medskip

\noindent \textbf{Case (a)}: $x \in M \setminus Z(M)$ or $y \in L \setminus Z(L)$. Here, Corollary \ref{cor:propernc} yields $d(x,y) \le 2$.

\medskip

\noindent \textbf{Case (b)}: $x \notin M$ and $y \notin L$. As $L$ and $M$ are normal and maximal in $G$, their intersection $H:=L \cap M$ is maximal in $L$ and in $M$. Notice that $C_H(x)$ is a proper subgroup of $H$, as otherwise $\langle H, x \rangle = L$ would centralise $x$, contradicting $x \notin Z(L)$. Similarly, $C_H(y) < H$. Since the union of two proper subgroups of $H$ is a proper subset of $H$, there exists an element $k \in H$ that centralises neither $x$ nor $y$. Then $(x,k,y)$ is a path in $\nc(G)$, and hence $d(x,y) \le 2$.

\medskip

\noindent \textbf{Case (c)}: $x \in Z(M)$ and $y \in Z(L)$. As $L$ is non-abelian, the quotient $G/Z(L)$ is not cyclic. Hence no element of $Z(L)$ lies in a generating set for $G$ of size two. Thus $y \in Z(L)$ is adjacent in $\nc(G)$ to each element of $G$ that does not centralise $y$, i.e., each element of $G \setminus L$. Similarly, $x$ is adjacent to each element of $G \setminus M$, and so $d(x,y) \le 2$.

\medskip

\noindent \textbf{Case (d)}: $x \in Z(M)$ and $y \notin L$, or $x \notin M$ and $y \in Z(L)$. As there is complete symmetry in $(y,M)$ and $(x,L)$, we may assume that $x \in Z(M)$ and $y \notin L$. Then $x \in (L \cap M) \setminus Z(L)$, and thus $Z(L) \cap M < L \cap M$. Additionally, as in the proof of Case (b), $C_{L \cap M}(y) < L \cap M$. Thus there exists an element $t \in L \cap M$ that does not lie in $Z(L)$ or $C_G(y)$. Then $\{t,y\}$ is an edge of $\nc(G)$, and $d(x,t) \le 2$ by Corollary \ref{cor:propernc}. Hence $d(x,y) \le 3$. It remains to show that $d(x,y) = 3$ if and only if $M$ is the unique maximal subgroup of $G$ that contains but does not centralise $y$.

If $M$ is indeed the unique maximal subgroup of $G$ that contains but does not centralise $y$, then the neighbourhood of $y$ in $\nc(G)$ is a subset of $M$. However, since $x \in Z(M)$, no element of $M$ is a neighbour of $x$. Thus $d(x,y) > 2$, and so $d(x,y) = 3$.

Suppose instead that there exists a maximal subgroup $K$ of $G$ that contains but does not centralise $y$, with $K \ne M$. Then $K \cap M$ and $C_K(y)$ are proper subgroups of $K$, and so there exists an element $s \in K$ that does not lie in $M$ or $C_G(y)$. It is clear that $\{s,y\}$ is an edge of $\nc(G)$. In addition, since $x \in Z(M)$, we conclude as in the proof of Case (c) that $x$ is adjacent in $\nc(G)$ to each element of $G \setminus M$, and in particular to $s$. Thus $(x,s,y)$ is a path in $\nc(G)$, and hence $d(x,y) \le 2$.
\end{proof}

We are now able to prove a more general version of Theorem \ref{thm:nilpncsummary}.

\begin{thm}
\label{thm:nilpncgeneral}
Let $G$ be a group with every maximal subgroup normal. If $\nc(G)$ contains an edge, then $\nd(G)$ is connected with diameter $2$ or $3$. Moreover, if $\diam(\nd(G)) = 3$, then $\nc(G) = \nd(G)$.
\end{thm}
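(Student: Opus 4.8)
The plan is to combine the two structural results of this section, Proposition \ref{prop:noabelmax} and Lemma \ref{lem:ncmaxnorm}, with the earlier characterisation of isolated vertices in Proposition \ref{prop:isolvertnc}. By Corollary \ref{cor:highgengp}, if $G$ is not $2$-generated then $\nc(G)$ is already connected with diameter $2$, so I would reduce immediately to the case where $G$ is $2$-generated and non-abelian; since $\nc(G)$ has an edge, $G$ is not minimal non-abelian. The first key observation is that every non-isolated vertex of $\nc(G)$ lies in a normal, non-abelian maximal subgroup in which it is non-central: indeed, by the ``moreover'' clause of Proposition \ref{prop:isolvertnc}, a non-isolated $g$ lies in some maximal subgroup $L$ with $g \in L \setminus Z(L)$, and $L$ is normal by hypothesis; $L$ is non-abelian precisely because $g \notin Z(L)$. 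Thus every pair of non-isolated vertices $x,y$ fits the hypotheses of Lemma \ref{lem:ncmaxnorm}, yielding $d(x,y) \le 3$ and hence that $\nd(G)$ is connected with diameter at most $3$. Proposition \ref{prop:nodiamonenc} rules out diameter $1$, so $\diam(\nd(G)) \in \{2,3\}$.

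It then remains to prove the final assertion: if some pair realises distance $3$, then $\nc(G)$ has no isolated vertices, i.e.\ $\nc(G) = \nd(G)$. Suppose $d(x,y) = 3$ for non-isolated $x \in L \setminus Z(L)$, $y \in M \setminus Z(M)$ with $L,M$ normal non-abelian maximal. By the equivalence in Lemma \ref{lem:ncmaxnorm}, we may assume case (i) holds: $x \in Z(M)$, $y \notin L$, with $M$ the unique maximal subgroup containing but not centralising $y$. The crucial point is that $x \in Z(M) \setminus Z(G)$ forces $Z(G) < Z(M)$, so the normal non-abelian maximal subgroup $M$ satisfies the hypothesis of Proposition \ref{prop:noabelmax}. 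That proposition then gives that \emph{every} maximal subgroup of $G$ is non-abelian.

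The final step is to deduce from this that no vertex of $\nc(G)$ is isolated. I would argue by contradiction: if $g$ were isolated, then by Proposition \ref{prop:isolvertnc} it lies in a unique maximal subgroup $N$ and $g \in Z(N)$; since every maximal subgroup is normal, $N \trianglelefteq G$, and Proposition \ref{prop:normisol} then forces $N$ to be abelian, contradicting the conclusion just obtained that every maximal subgroup is non-abelian. Hence $\nc(G)$ has no isolated vertices and $\nc(G) = \nd(G)$, as required. I expect the main obstacle to be the bookkeeping in the second paragraph: one must verify that the distance-$3$ hypothesis genuinely triggers the $Z(G) < Z(M)$ condition needed for Proposition \ref{prop:noabelmax}, and handle the symmetric case (ii) of Lemma \ref{lem:ncmaxnorm} in parallel (swapping the roles of $(x,L)$ and $(y,M)$), rather than any deep new argument.
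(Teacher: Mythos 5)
Your proposal is correct and follows essentially the same route as the paper: the reduction via Corollary \ref{cor:highgengp}, the use of Proposition \ref{prop:isolvertnc} to place non-isolated vertices non-centrally in normal maximal subgroups, and Lemma \ref{lem:ncmaxnorm} for the distance bound. For the final clause you argue the contrapositive (distance $3$ implies no isolated vertices) whereas the paper argues that an isolated vertex forces diameter $2$, but both directions use exactly the same chain of results (Propositions \ref{prop:normisol} and \ref{prop:noabelmax}), so the content is identical.
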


\begin{proof}
Suppose that $\nc(G)$ contains an edge, so that $G$ is non-abelian and not minimal non-abelian. We may also assume that $G$ is $2$-generated, as otherwise $\nc(G)$ is connected with diameter $2$ by Corollary \ref{cor:highgengp}.

Let $x$ and $y$ be non-isolated vertices of $\nc(G)$. Then Proposition \ref{prop:isolvertnc} shows that $G$ contains maximal subgroups $L$ and $M$ such that $x \in L \setminus Z(L)$ and $y \in M \setminus Z(M)$. As these maximal subgroups are normal in $G$, applying Lemma \ref{lem:ncmaxnorm} to the $4$-tuple $(x,L,y,M)$ gives $d(x,y) \le 3$. The theorem's first conclusion now follows from Proposition \ref{prop:nodiamonenc}, which shows that no connected component of $\nc(G)$ has diameter $1$.

Suppose now that $\nc(G)$ has an isolated vertex $g$. To complete the proof, we will show that $\diam(\nd(G)) = 2$. We observe from Proposition \ref{prop:isolvertnc} that $g$ lies in a unique maximal subgroup $K$ of $G$, and that $g \in Z(K)$. Moreover, as $K \trianglelefteq G$, Proposition \ref{prop:normisol} shows that $K$ is abelian. It therefore follows from Proposition \ref{prop:noabelmax} that each non-abelian maximal subgroup $M$ of $G$ satisfies $Z(M) \le Z(G)$. Thus Lemma \ref{lem:ncmaxnorm} yields $\diam(\nd(G)) = 2$.
\end{proof}

Theorem \ref{thm:nilpncsummary} now follows immediately.

\section{Finite $p$-groups}
\label{sec:pgroups}

In this section, we determine the diameters of the connected components of $\nc(G)$ when $G$ is a finite $p$-group, with $p$ a prime, and in particular prove Theorem \ref{thm:pgpnc}. Corollary \ref{cor:highgengp} implies that it suffices to consider the case where $G$ is non-abelian and $2$-generated. We therefore begin by deducing information about non-abelian, finite, $2$-generated $p$-groups.

The ``equality'' statement in part (iii) of the following lemma is well known (for example, see \cite[Lemma 2.3]{zhangsun}).

\begin{lem}
\label{lem:zinfrat}
Let $G$ be a non-abelian, finite, $2$-generated $p$-group. Then the following statements hold.
\begin{enumerate}[label={(\roman*)},font=\upshape]
\item \label{zinfrat1} $\Phi(G)$ is a maximal subgroup of each maximal subgroup of $G$.
\item \label{zinfrat2} Each element of $G \setminus \Phi(G)$ lies in a unique maximal subgroup of $G$.
\item \label{zinfrat3} $Z(G) \le \Phi(G)$, with equality if and only if $G$ is minimal non-abelian.
\item \label{zinfrat4} If $G$ is not minimal non-abelian, then $G$ contains at most one abelian maximal subgroup.
\end{enumerate}
\end{lem}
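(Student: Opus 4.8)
The plan is to derive everything from Burnside's Basis Theorem. Since $G$ is non-abelian it is non-cyclic, and as it is $2$-generated we have $G/\Phi(G) \cong C_p \times C_p$, whose only proper nontrivial subgroups are the $p+1$ subgroups of order $p$. By the correspondence theorem these pull back to the maximal subgroups of $G$, each of which therefore contains $\Phi(G)$ with index $p$. This gives (i) at once: $\Phi(G)$ has index $p$ in every maximal subgroup $M$, hence is maximal in $M$. For (ii), I would observe that an element $g \in G \setminus \Phi(G)$ has nontrivial image $\bar{g}$ in $G/\Phi(G)$, which lies in a unique order-$p$ subgroup of $C_p^2$, namely $\langle \bar{g} \rangle$; the maximal subgroups of $G$ containing $g$ correspond exactly to the order-$p$ subgroups containing $\bar{g}$, so there is precisely one.

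For (iii), I would first show $Z(G) \le \Phi(G)$. If some $z \in Z(G)$ lay outside $\Phi(G)$, then $\bar{z} \ne 1$, so I could choose $g \in G$ with $\{\bar{z}, \bar{g}\}$ generating $G/\Phi(G)$; the non-generator property of $\Phi(G)$ then forces $\langle z, g \rangle = G$, and since $z$ is central this would make $G$ abelian, a contradiction. The equality clause is stated as known (cf.\ \cite[Lemma 2.3]{zhangsun}), but I would sketch both directions via the recurring principle that the maximal subgroups generate $G$. If $G$ is minimal non-abelian, then every maximal subgroup is abelian and contains $\Phi(G)$, so each centralises $\Phi(G)$; generating $G$ yields $\Phi(G) \le Z(G)$, and combined with $Z(G) \le \Phi(G)$ this gives equality. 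Conversely, if $Z(G) = \Phi(G)$, then for each maximal $M$ we have $Z(G) \le Z(M)$ and $M/Z(G) = M/\Phi(G)$ is cyclic of order $p$, so $M/Z(M)$ is cyclic and $M$ is abelian; hence $G$ is minimal non-abelian.

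Finally, (iv) is where the only genuine combinatorics lives. Assuming $G$ has two distinct abelian maximal subgroups $L$ and $M$, I would first establish $L \cap M = \Phi(G)$: both contain $\Phi(G)$, while their images in $G/\Phi(G) \cong C_p^2$ are distinct order-$p$ subgroups and so meet trivially. Since $L$ and $M$ are abelian, each centralises $L \cap M = \Phi(G)$; and because $L$ is maximal with $M \not\le L$, we have $\langle L, M \rangle = G$. Hence $\Phi(G) \le Z(G)$, so $Z(G) = \Phi(G)$ by (iii), and (iii) then forces $G$ to be minimal non-abelian, contrary to hypothesis.

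I expect step (iv) — specifically pinning down $L \cap M = \Phi(G)$ and extracting the centrality of $\Phi(G)$ — to be the main (though still modest) obstacle. Everything else follows mechanically from the basis theorem and the correspondence between maximal subgroups of $G$ and the order-$p$ subgroups of $C_p^2$.
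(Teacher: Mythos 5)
Your proof is correct and follows essentially the same route as the paper: everything is derived from Burnside's Basis Theorem and the identification of the maximal subgroups with the order-$p$ subgroups of $G/\Phi(G)\cong C_p\times C_p$. The only differences are organisational — the paper obtains the equality criterion in (iii) and statement (iv) simultaneously by picking $y\in\Phi(G)\setminus Z(G)$ and noting that a non-central element is centralised by at most one maximal subgroup, whereas you prove the three implications separately — but your arguments rest on the same underlying fact (two distinct maximal subgroups generate $G$) and are all sound.
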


\begin{proof}\leavevmode

\noindent (i) As $G$ is non-abelian and $2$-generated, Burnside's Basis Theorem implies that $\Phi(G)$ has index $p^2$ in $G$. Hence $\Phi(G)$ is maximal in each maximal subgroup of $G$.

\medskip

\noindent (ii) By (i), $\Phi(G)$ is the intersection of each pair of distinct maximal subgroups of $G$.

\medskip

\noindent (iii)--(iv) Let $M$ be a maximal subgroup of $G$, and let $x \in M \setminus \Phi(G)$. Then by (ii), $M$ is the unique maximal subgroup of $G$ containing $x$. As the abelian group $\langle x, Z(G) \rangle$ is a proper subgroup of $G$, it follows that $M$ contains $Z(G)$. As this holds for every maximal subgroup of $G$, we conclude that $Z(G) \le \Phi(G)$.

Now, if $Z(G) = \Phi(G)$, then $\langle \Phi(G),x \rangle$ is abelian, and is equal to $M$ by (i). This again holds for every maximal subgroup of $G$, and so $G$ is minimal non-abelian.

If instead there exists $y \in \Phi(G) \setminus Z(G)$, then at most one maximal subgroup of $G$ centralises $y$. As $y$ lies in each maximal subgroup of $G$, it follows that at most one of these maximal subgroups is abelian, and in particular $G$ is not minimal non-abelian.
\end{proof}

We are now able to prove our second main theorem.

\begin{proof}[Proof of Theorem \ref{thm:pgpnc}]
Lemma \ref{lem:zinfrat} shows that if $G$ is non-abelian, $2$-generated and not minimal non-abelian, then $G$ contains at most one abelian maximal subgroup, and $Z(G) < \Phi(G)$. Hence, in this case, each maximal subgroup of $G$ contains $Z(G)$. Therefore, exactly one of (i), (ii) and (iii) holds.

\medskip

\noindent (i) This is clear.

\medskip

\noindent (ii) This is an immediate consequence of Corollary \ref{cor:highgengp}.

\medskip

\noindent (iii)(a) By Lemma \ref{lem:zinfrat}\ref{zinfrat2}, the abelian maximal subgroup $M$ of $G$ is the unique maximal subgroup containing each element of $M \setminus \Phi(G)$. It follows from Proposition \ref{prop:isolvertnc} that a vertex $g$ of $\nc(G)$ is isolated if and only if $g \in M \setminus \Phi(G)$. In particular, $\nc(G)$ has isolated vertices, and hence $\nd(G)$ is connected with diameter $2$ by Theorem \ref{thm:nilpncsummary}.

\medskip

\noindent (iii)(b) Let $x, y \in G \setminus Z(G)$, and let $L$ and $M$ be maximal subgroups of $G$ that contain $x$ and $y$, respectively. As $Z(L) = Z(G) = Z(M)$, applying Lemma \ref{lem:ncmaxnorm} to the $4$-tuple $(x,L,y,M)$ gives $d(x,y) \le 2$. It follows from Proposition \ref{prop:nodiamonenc} that $\nc(G)$ is connected with diameter $2$.

\medskip

\noindent (iii)(c) Let $M$ be a (non-abelian) maximal subgroup of $G$ that satisfies $Z(G) < Z(M)$. In addition, let $y \in M \setminus \Phi(G)$, and let $x \in Z(M) \setminus Z(G)$. Proposition \ref{prop:normisol} implies that each element of $Z(M)$ lies in a maximal subgroup of $G$ distinct from $M$. Lemma \ref{lem:zinfrat}\ref{zinfrat2} shows that $M$ is the unique maximal subgroup of $G$ containing $y$, and thus $y \notin Z(M)$. Additionally, the element $x$ of $Z(M)$ lies in a maximal subgroup $L$ of $G$ that is not equal to $M$. Since $M = C_G(x)$, it follows that $x \notin Z(L)$. Hence applying Lemma \ref{lem:ncmaxnorm} to the $4$-tuple $(x,L,y,M)$ yields $d(x,y) = 3$. Theorem \ref{thm:nilpncsummary} therefore implies that $\nc(G)$ is connected with diameter $3$.
\end{proof}

\section{Direct products of groups}
\label{sec:dirprods}

In this section, we explore the structure of the non-commuting, non-generating graph of a direct product of groups. In particular, we conclude with a proof of Theorem \ref{thm:nilpgpnc}.

Let $G$ and $H$ be arbitrary groups, with $g_1, g_2 \in G$ and $h_1, h_2 \in H$. Observe that if $[g_1,g_2] \ne 1$ or $[h_1,h_2] \ne 1$, then the elements $(g_1,h_1)$ and $(g_2,h_2)$ of $G \times H$ do not commute and are therefore non-central. In particular, these elements are vertices of $\nc(G \times H)$. Additionally, if $\langle g_1, g_2 \rangle \ne G$ or $\langle h_1,h_2 \rangle \ne H$, then $\langle (g_1,h_1),(g_2,h_2)\rangle \ne G \times H$. Therefore, relatively weak conditions are required for two vertices of $\nc(G \times H)$ to be adjacent. We utilise this fact to prove the following two results.

\begin{lem}
\label{lem:direc2gen}
Suppose that $G$ is non-abelian and $H$ is non-cyclic. Then $\nc(G \times H)$ is connected with diameter $2$.
\end{lem}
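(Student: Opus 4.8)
The plan is to exploit the key observation in the paragraph preceding the lemma: in a direct product, two elements $(g_1,h_1)$ and $(g_2,h_2)$ are adjacent in $\nc(G\times H)$ as soon as they are non-central (i.e.\ they fail to commute in at least one coordinate) \emph{and} they fail to generate (i.e.\ $\langle g_1,g_2\rangle\ne G$ or $\langle h_1,h_2\rangle\ne H$). Since $G$ is non-abelian and $H$ is non-cyclic, I would first fix a convenient non-commuting pair $a,b\in G$ with $[a,b]\ne 1$, and note that $H$ being non-cyclic means no single element generates $H$; more usefully, for any $h\in H$ there is some $h'\in H$ with $\langle h,h'\rangle\ne H$ (indeed any proper subgroup containing $h$ works, and non-cyclicity guarantees $\langle h\rangle\ne H$). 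The strategy is to produce, for each vertex, a controlled neighbour, and then to link any two vertices through a common ``hub'' region, giving diameter at most $2$; Proposition~\ref{prop:nodiamonenc} then rules out diameter $1$, forcing diameter exactly $2$ and connectedness.

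Concretely, I would take arbitrary vertices $u=(g_1,h_1)$ and $v=(g_2,h_2)$ of $\nc(G\times H)$ and exhibit a common neighbour $w=(c,k)$. The idea is to choose the $H$-coordinate $k$ so that $\langle h_i,k\rangle\ne H$ for $i=1,2$ (this handles the non-generating condition in both comparisons simultaneously, using that $H$ is non-cyclic so that, e.g., $\langle h_1,h_2,k\rangle$ can be kept inside a proper subgroup, or by choosing $k$ inside a suitable proper subgroup containing $h_1$ and $h_2$), and then to choose the $G$-coordinate $c$ so that $c$ fails to commute with both $g_1$ and $g_2$ in $G$. The latter is exactly the content of Proposition~\ref{prop:noncomdiam} / Corollary~\ref{cor:highgengp}-style reasoning: in the non-abelian group $G$, since the centraliser of a non-central element is a proper subgroup and the union of two proper subgroups is proper, one can find $c\in G$ not centralising either $g_1$ or $g_2$ when these are non-central; if one of the $g_i$ is central I would instead exploit the non-commutation in the $H$-coordinate, or pick $c$ non-central and argue the edge arises from non-generation. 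Then $[u,w]\ne 1$ and $[v,w]\ne 1$ from the $G$-coordinate, while non-generation holds from the $H$-coordinate, so $w$ is adjacent to both $u$ and $v$, giving $d(u,v)\le 2$.

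The main subtlety, and the step I expect to be the real obstacle, is the bookkeeping over the various degenerate cases according to whether each coordinate of $u$ and $v$ is central/commuting or generating. Because adjacency only requires non-commutation \emph{somewhere} and non-generation \emph{somewhere}, there are several configurations (e.g.\ both $u,v$ already having non-central $G$-coordinates, versus one vertex whose $G$-coordinate lies in $Z(G)$ so that all its ``non-centrality'' comes from $H$) and I must ensure the single chosen hub $w$ simultaneously violates commuting with both vertices and avoids generating with both. I would organise this by always routing the non-generating condition through the $H$-coordinate (choosing $k$ inside a fixed proper subgroup of $H$ containing $h_1,h_2$, which exists since $H$ is non-cyclic) and always routing the non-commuting condition through whichever coordinate is available: if both vertices have a non-central $G$-coordinate I use $G$ as above, and I handle the remaining cases by a symmetric choice or by noting that a vertex with central $G$-coordinate has non-central $H$-coordinate, so I can force non-commutation in $H$ instead. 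Making the hub $w$ work uniformly across these cases is the crux; once it is in place, connectedness and the diameter bound follow immediately, and Proposition~\ref{prop:nodiamonenc} upgrades the bound to diameter exactly $2$.
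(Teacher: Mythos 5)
Your overall architecture is the same as the paper's: exhibit a common neighbour of two arbitrary vertices and invoke Proposition~\ref{prop:nodiamonenc} to rule out diameter $1$. But two of your concrete steps do not survive scrutiny. First, your mechanism for the non-generating condition is to put the hub's $H$-coordinate $k$ inside ``a proper subgroup of $H$ containing $h_1$ and $h_2$, which exists since $H$ is non-cyclic''. No such subgroup need exist: non-cyclicity of $H$ only guarantees that each $\langle h_i\rangle$ is proper, not that $\langle h_1,h_2\rangle$ is (take $H=S_3$, $h_1=(1\,2)$, $h_2=(1\,2\,3)$). The condition you actually need, $\langle h_i,k\rangle\ne H$ for $i=1,2$, involves two separate subgroups, one per edge, and is met simply by taking $k=1$; this is exactly what the paper does in its main cases.

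Second, the ``degenerate'' configuration you defer is where the real content lies, and your uniform plan breaks there. Up to symmetry the remaining case is $g_1\notin Z(G)$, $g_2\in Z(G)$, $h_1\in Z(H)$, $h_2\notin Z(H)$. The hub $(c,k)$ must fail to commute with $(g_2,h_2)$ purely through the $H$-coordinate (since $g_2$ is central), so $k\ne 1$ and $[k,h_2]\ne 1$; in particular you can no longer route non-generation for that edge through $H$, since $\langle h_2,k\rangle$ may well be all of $H$ (for instance if $H$ is minimal non-abelian). The paper's resolution is to take $c\in G\setminus C_G(g_1)$ and $k\in H\setminus C_H(h_2)$ and to extract non-generation from the \emph{central} coordinate of each endpoint: $\langle h_1,k\rangle$ is abelian (as $h_1\in Z(H)$) and hence proper in the necessarily non-abelian $H$, while $\langle g_2,c\rangle$ is abelian (as $g_2\in Z(G)$) and hence proper in $G$. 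Your phrase ``a symmetric choice'' does not supply this argument, and since this is the one case in which non-commutation and non-generation must come from different coordinates on the two edges, it cannot be absorbed into your main construction. With these two repairs your proof coincides with the paper's.
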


\begin{proof}
Let $(g_1,h_1),(g_2,h_2) \in (G \times H)\setminus(Z(G \times H))$. Observe that if $g_1, g_2 \in Z(G)$, then $h_1, h_2 \notin Z(H)$, and vice versa. We split the proof into three (not all mutually exclusive) cases. By Proposition \ref{prop:nodiamonenc}, it suffices in each case to find a path of length $2$ in $\nc(G)$ between $(g_1,h_1)$ and $(g_2,h_2)$.

\medskip

\noindent \textbf{Case (a)}: $g_1, g_2 \notin Z(G)$. Proposition \ref{prop:noncomdiam} shows that there exists an element $u \in G$ that does not commute with $g_1$ or $g_2$. Additionally, as $H$ is not cyclic, $\langle h_1,1 \rangle < H$ and $\langle 1,h_2 \rangle < H$. It follows that $((g_1,h_1),(u,1),(g_2,h_2))$ is a path in $\nc(G \times H)$.

\medskip

\noindent \textbf{Case (b)}: $h_1, h_2 \notin Z(H)$. This case only occurs if $H$ is non-abelian, and we conclude as in Case (a) that $d((g_1,h_1),(g_2,h_2)) \le 2$.

\medskip

\noindent \textbf{Case (c)}: Exactly one of $g_1$ and $g_2$ is central in $G$, and exactly one of $h_1$ and $h_2$ is central in $H$. We will assume without loss of generality that $h_1 \in Z(H)$, so that $h_2 \notin Z(H)$. Then, since $(g_1,h_1),(g_2,h_2) \notin Z(G \times H)$, we deduce that $g_1 \notin Z(G)$ and $g_2 \in Z(G)$. Let $s \in G \setminus C_G(g_1)$ and $t \in H \setminus C_G(h_2)$. The non-abelian group $G$ properly contains the abelian group $\langle g_2,s \rangle$, and similarly, $\langle h_1, t \rangle < H$. Thus $((g_1,h_1),(s,t),(g_2,h_2))$ is a path in $\nc(G \times H)$.
\end{proof}

The above result shows that the non-commuting, non-generating graph behaves very differently to the generating graph in the context of $2$-generated direct powers of non-abelian groups. Indeed, as mentioned in \S\ref{sec:intro}, Crestani and Lucchini proved that there is no upper bound for the diameter of $\Gamma^+(G)$, for $G$ such a direct power.

In the following proposition and its proof, we use the symbol $d$ to denote distances in both $\nc(G)$ and $\nc(G \times H)$. In each case, it will be clear which graph contains the relevant vertices. Additionally, if $x$ and $y$ are vertices of a graph that lie in distinct connected components, then we write $d(x,y) = \infty$.

\begin{prop}
\label{prop:direccyclic}
Suppose that $G$ is non-abelian and $H$ is cyclic. Additionally, let $g_1$ and $g_2$ be distinct elements of $G \setminus Z(G)$, and let $h_1, h_2 \in H$. Then the following statements hold.
\begin{enumerate}[label={(\roman*)},font=\upshape]
\item $d((g_1,h_1),(g_2,h_2)) \le d(g_1,g_2)$. \label{direccycl1}
\item If $g_1$ is not isolated in $\nc(G)$, then $d((g_1,h_1),(g_1,h_2)) \in \{0,2\}$. \label{direccycl2}
\end{enumerate}
Hence if $\nc(G)$ is connected with diameter $k$, then $\nc(G \times H)$ is connected with diameter at most $k$.
\end{prop}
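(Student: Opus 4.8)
The plan is to prove the two numbered claims (i) and (ii) first, and then deduce the final ``Hence'' statement as a formal consequence.

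For claim (i), I would start with a shortest path $g_1 = z_0, z_1, \dots, z_k = g_2$ in $\nc(G)$, where $k = d(g_1, g_2)$. The idea is to lift this to a path in $\nc(G \times H)$ by padding each vertex with an $H$-coordinate. The natural first attempt is to interpolate the $H$-coordinates between $h_1$ and $h_2$; however, since $H$ is cyclic, the cleanest approach is to use the observations from the paragraph preceding Lemma \ref{lem:direc2gen}: if consecutive $z_i, z_{i+1}$ are adjacent in $\nc(G)$, then $[z_i, z_{i+1}] \ne 1$ and $\langle z_i, z_{i+1} \rangle < G$. For any choice of $H$-coordinates $a_i, a_{i+1} \in H$, the pairs $(z_i, a_i)$ and $(z_{i+1}, a_{i+1})$ automatically fail to commute (the $G$-coordinates don't commute) and fail to generate $G \times H$ (since $\langle z_i, z_{i+1}\rangle \ne G$ forces the projection to $G$ to be proper). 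The only thing to check is that each $(z_i, a_i)$ is genuinely a \emph{vertex} of $\nc(G \times H)$, i.e.\ non-central; since $z_i \notin Z(G)$ for all $i$ (each $z_i$ is a vertex of $\nc(G)$), this is immediate. So I would simply set the endpoints' $H$-coordinates to $h_1$ and $h_2$ and choose the interior coordinates freely, obtaining a path of length $k$, giving $d((g_1,h_1),(g_2,h_2)) \le k = d(g_1,g_2)$.

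For claim (ii), the two vertices share the same $G$-coordinate $g_1 \notin Z(G)$, so they commute unless forced otherwise; if $h_1 = h_2$ then $d = 0$. Assuming $h_1 \ne h_2$, I must produce a path of length exactly $2$, so I need an intermediate vertex $(u, b)$ adjacent to both. Since $g_1$ is non-isolated in $\nc(G)$, it has a neighbour $u$ there, so $[g_1, u] \ne 1$ and $\langle g_1, u\rangle < G$. Then for \emph{any} $b \in H$, the vertex $(u,b)$ fails to commute with $(g_1, h_1)$ and with $(g_1, h_2)$ (the $G$-coordinates don't commute), and fails to generate $G \times H$ with either (since the $G$-projection $\langle g_1, u\rangle$ is proper). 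I must also confirm $(g_1, h_1)$ and $(g_1, h_2)$ are not adjacent to \emph{each other}: their $G$-coordinates are equal, hence commute, so they are non-adjacent, forcing $d \ge 2$; combined with the length-$2$ path this gives $d = 2$. A small point to verify is that these vertices are non-central, which again follows since $g_1 \notin Z(G)$.

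Finally, for the ``Hence'' statement, suppose $\nc(G)$ is connected with diameter $k$. Any two vertices of $\nc(G \times H)$ have the form $(a, h)$ and $(a', h')$. If $a \ne a'$ in $G$ (both necessarily non-central, or at least reducible to the non-central case), claim (i) bounds their distance by $d(a,a') \le k$. If $a = a'$, then $a$ is a vertex of the connected graph $\nc(G)$, hence non-isolated, so claim (ii) gives distance in $\{0,2\} \subseteq \{0,\dots,k\}$, using $k \ge 2$ (which holds by Proposition \ref{prop:nodiamonenc}, since a connected graph with an edge has diameter at least $2$). I expect the main obstacle to be the bookkeeping of exactly which $G$-coordinates can be central: because $H$ is cyclic, a vertex $(a,h)$ of $\nc(G \times H)$ can have $a \in Z(G)$ only if $h \notin Z(H) = H$, which is impossible, so in fact every vertex of $\nc(G \times H)$ has non-central $G$-coordinate. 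Nailing down this reduction cleanly — ensuring the hypotheses ``$g_1, g_2 \in G \setminus Z(G)$'' of (i) and (ii) actually cover all vertex pairs of $\nc(G \times H)$ — is the one place where care is needed, though it ultimately rests on the simple observation that $H$ cyclic forces both coordinates to be non-central simultaneously.
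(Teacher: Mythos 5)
Your proposal is correct and follows essentially the same route as the paper: lift a shortest path in $\nc(G)$ to $\nc(G\times H)$ by attaching arbitrary $H$-coordinates for (i), insert a neighbour of $g_1$ as the middle vertex of a length-$2$ path for (ii), and note that $H$ cyclic forces every vertex of $\nc(G\times H)$ to have non-central $G$-coordinate for the final claim. The only (immaterial) difference is that the paper fixes the interior $H$-coordinates to be $h_2$ rather than leaving them free.
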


\begin{proof}
Suppose that $r:=(x_1,x_2,\ldots,x_k)$ is a path in $\nc(G)$. As no two adjacent vertices in this path commute or generate $G$, it follows that $((x_1,h_1),(x_2,h_2),\ldots,(x_k,h_2))$ is a path in $\nc(G \times H)$.

\medskip

\noindent (i) This is clear if $d(g_1,g_2) = \infty$. Otherwise, we obtain the result by setting $r$ to be a path in $\nc(G)$ of minimal length with $x_1 = g_1$ and $x_k = g_2$.

\medskip

\noindent (ii) This is clear if $h_1 = h_2$. Otherwise, note that $(g_1,h_1)$ and $(g_1,h_2)$ commute, and hence are not adjacent in $\nc(G \times H)$. The result now follows by setting $r$ to be the path $(g_1,x_2,g_1)$, with $x_2$ some neighbour of $g_1$ in $\nc(G)$.

Suppose finally that $\nc(G)$ is connected with diameter $k$, and recall from Proposition \ref{prop:nodiamonenc} that $k \ge 2$. Since $\{(g,h) \mid g \in G \setminus Z(G), h \in H\}$ is the set of vertices of $\nc(G \times H)$, it follows from (i) and (ii) that $\nc(G \times H)$ is connected with diameter at most $k$.
\end{proof}


It is easy to show, using Burnside's Basis Theorem, that the direct product of a finite $d_1$-generated $p$-group and a finite $d_2$-generated $p$-group is $(d_1+d_2)$-generated. Thus no non-abelian $2$-generated finite $p$-group can be expressed as a nontrivial direct product. It will follow from Theorem \ref{thm:nilpgpnc} that if $H$ is a finite cyclic group and $G$ is a finite nilpotent group such that $\nc(G)$ is connected, then $\diam(\nc(G \times H)) \ne \diam(\nc(G))$ if and only if {$\diam(\nc(G \times H)) = 3$} and $|G|$ and $|H|$ have a common prime divisor. However, this is not always the case if $G$ is not nilpotent. For example, the non-commuting, non-generating graph of the symmetric group $S_4$ is connected with diameter $3$, as is the graph $\nc(S_4 \times C_3)$. On the other hand, $\diam(\nc(S_4 \times C_2)) = 2$ (note that $S_4 \times C_2$ is $2$-generated).

We now wish to determine which vertices of $\nc(G \times H)$ are isolated. Proposition \ref{prop:isolvertnc} suggests that a classification of the maximal subgroups of $G \times H$ will aid us in this task. The next result, which is a consequence of Goursat's Lemma \cite[\S11--12]{goursat}, gives such a classification.

\begin{lem}[{\cite[p.~354]{thevenaz}}]
\label{lem:maxsubgdirect}
Let $G$ and $H$ be groups, and let $K$ be a subgroup of $G \times H$. Then $K$ is a maximal subgroup of $G \times H$ if and only if one of the following occurs:
\begin{enumerate}[label={(\roman*)},font=\upshape]
\item $K = M_G \times H$, for some maximal subgroup $M_G$ of $G$;
\item $K = G \times M_H$, for some maximal subgroup $M_H$ of $H$; or
\item $K = \{(g,h) \mid g \in G, h \in H, (N_1g)\theta = N_2h\}$, where $N_1$ and $N_2$ are maximal normal subgroups of $G$ and $H$, respectively, and $\theta$ is an isomorphism from $G/N_1$ to $H/N_2$.
\end{enumerate}
\end{lem}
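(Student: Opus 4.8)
The plan is to prove both directions using the classical Goursat correspondence between subgroups of $G \times H$ and isomorphisms between sections (subquotients) of $G$ and $H$. Recall that Goursat's Lemma associates to each subgroup $K \le G \times H$ the data of subgroups $A_1 \trianglelefteq B_1 \le G$ and $A_2 \trianglelefteq B_2 \le H$ together with an isomorphism $\phi \colon B_1/A_1 \to B_2/A_2$, where $B_1$ and $B_2$ are the projections of $K$ onto $G$ and $H$, the groups $A_1$ and $A_2$ are the kernels of those projections (i.e.\ $A_1 \times \{1\} = K \cap (G \times \{1\})$ and $\{1\} \times A_2 = K \cap (\{1\} \times H)$), and $\phi$ is the isomorphism induced by $K$ via $(A_1 b_1)\phi = A_2 b_2$ whenever $(b_1,b_2) \in K$. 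The index of $K$ in $G \times H$ equals $[G:B_1]\,[H:B_2]\,|B_1/A_1|$, so maximality of $K$ is a strong numerical and structural constraint on this data.

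First I would handle the ``if'' direction, which is the routine one: in cases (i) and (ii) the subgroup $K$ is a direct factor times a maximal subgroup, so the quotient $(G \times H)/K$ is isomorphic to $G/M_G$ or $H/M_H$ respectively, each a group with no proper nontrivial subgroup containing the identity coset beyond itself; more directly, $M_G \times H$ is maximal precisely because $M_G$ is maximal in $G$, and symmetrically for (ii). In case (iii), the subgroup $K$ is the preimage in $G \times H$ of the ``graph'' of $\theta$ inside $(G/N_1) \times (H/N_2)$; since $N_1$ and $N_2$ are maximal normal, the quotients $G/N_1$ and $H/N_2$ are simple, and the graph of an isomorphism $\theta$ between two copies of a simple group is a maximal subgroup of the direct product of those two simple groups. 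Pulling this back shows $K$ is maximal in $G \times H$.

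For the ``only if'' direction, I would start from an arbitrary maximal $K$ and analyse the Goursat data $(B_1, A_1, B_2, A_2, \phi)$. The key dichotomy is whether both projections are surjective, i.e.\ whether $B_1 = G$ and $B_2 = H$. If $B_1 < G$, then $K \le B_1 \times H \lneq G \times H$, so maximality forces $K = B_1 \times H$ and then $B_1$ must be maximal in $G$, giving case (i); symmetrically $B_2 < H$ gives case (ii). In the remaining situation both projections are onto, so $\phi \colon G/A_1 \to H/A_2$ is an isomorphism. Here $K$ is the preimage of the graph of $\phi$ in $(G/A_1)\times(H/A_2)$, and maximality of $K$ forces $A_1$ (hence $A_2$) to be maximal normal: if $A_1$ were contained in a strictly larger proper normal subgroup $N \vartriangleleft G$, I could enlarge $K$ to a proper subgroup strictly containing it, contradicting maximality. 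Setting $N_1 = A_1$, $N_2 = A_2$, $\theta = \phi$ yields case (iii).

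The main obstacle I anticipate is the careful bookkeeping in the ``only if'' direction when both projections are surjective: I must show precisely that maximality of $K$ translates into $A_1$ being a \emph{maximal} normal subgroup of $G$ (equivalently that $G/A_1$ is simple), and confirm that the isomorphism $\theta$ then matches the statement. The cleanest way to discharge this is to pass to the quotient $(G/A_1) \times (H/A_2) \cong (G/A_1) \times (G/A_1)$ via $\phi$ and use the standard fact that the diagonal-type subgroups $\{(s, s\theta)\}$ of $S \times S'$, for $S, S'$ simple, are maximal, while any proper non-maximal $A_1$ would produce an intermediate subgroup; I would then lift the correspondence back through the natural projection $G \times H \to (G/A_1)\times(H/A_2)$, whose kernel $A_1 \times A_2$ lies inside $K$, so that the subgroup lattice above $K$ matches the subgroup lattice above the graph of $\theta$. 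Since this lemma is cited from \cite{thevenaz} and is a direct consequence of Goursat's Lemma, I would keep the write-up brief and lean on these standard facts rather than reproving Goursat's correspondence in full.
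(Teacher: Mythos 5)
The paper gives no proof of this lemma: it is quoted from Th\'evenaz with the remark that it is a consequence of Goursat's Lemma, which is exactly the route you take, and your argument (projections $B_1,B_2$; the dichotomy between a non-surjective projection giving cases (i)--(ii) and both projections surjective giving the graph-type case; enlarging $K$ by $N\times\{1\}$ for $A_1<N\vartriangleleft G$ to force $G/A_1$ simple) is correct and standard. One cosmetic slip: in case (i) the quotient $(G\times H)/(M_G\times H)$ need not be a group, since $M_G$ need not be normal in $G$; but your ``more directly'' argument --- every subgroup above $M_G\times H$ contains $\{1\}\times H$ and hence has the form $C\times H$ with $M_G\le C\le G$ --- is the right one and does the job.
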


In what follows, we assume the convention that if $H$ is an infinite group, then each positive integer divides $|H|$.

\begin{thm}
\label{thm:direcunmax}
Let $G$ and $H$ be finitely generated groups, with $G$ non-cyclic, and let $g \in G$ and $h \in H$. Additionally, let $\mathcal{L}$ be the set of maximal subgroups $L$ of $G$ such that $L \trianglelefteq G$ and $|G:L|$ divides $|H|$. Then $(g,h)$ lies in a unique maximal subgroup of $G \times H$ if and only if all of the following hold:
\begin{enumerate}[label={(\roman*)},font=\upshape]
\item \label{direcunmax1} $\langle h \rangle = H$;
\item \label{direcunmax2} $g$ lies in a unique maximal subgroup $M_G$ of $G$; and
\item \label{direcunmax3} $\mathcal{L} \subseteq \{M_G\}$.
\end{enumerate}
\end{thm}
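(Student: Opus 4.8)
The plan is to count the maximal subgroups of $G \times H$ that contain $(g,h)$, using the classification in Lemma \ref{lem:maxsubgdirect}, and to show that there is exactly one precisely when \ref{direcunmax1}--\ref{direcunmax3} hold. Write $a$, $b$ and $c$ for the number of such containing maximal subgroups of types (i), (ii) and (iii) respectively. A type-(i) subgroup $M_G \times H$ contains $(g,h)$ if and only if $g \in M_G$, so $a$ equals the number of maximal subgroups of $G$ containing $g$; similarly $b$ is the number of maximal subgroups of $H$ containing $h$. Since $G$ is non-cyclic and finitely generated, $\langle g \rangle$ is a proper subgroup and hence lies in some maximal subgroup, so $a \ge 1$. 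Therefore $(g,h)$ lies in a unique maximal subgroup if and only if $a = 1$, $b = 0$ and $c = 0$. First I would record the easy equivalences: $a = 1$ is exactly condition \ref{direcunmax2}, and (using that $H$ is finitely generated) $b = 0$ holds if and only if $\langle h \rangle = H$, which is condition \ref{direcunmax1}. It then remains to prove that, assuming \ref{direcunmax1} and \ref{direcunmax2}, the equality $c = 0$ is equivalent to condition \ref{direcunmax3}.

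The key structural observation I would isolate first is that a \emph{normal} maximal subgroup of any group has prime index: if $L \trianglelefteq G$ is maximal, then the correspondence theorem shows that $G/L$ has no proper nontrivial subgroup, so $G/L \cong C_p$ for some prime $p$. Consequently every $L \in \mathcal{L}$ satisfies $G/L \cong C_p$ with $p \mid |H|$. Moreover, a type-(iii) subgroup has a quotient $H/N_2$ which, once \ref{direcunmax1} holds, is a quotient of the cyclic group $H$ and hence cyclic; so an isomorphism $\theta \colon G/N_1 \to H/N_2$ can exist only when $G/N_1 \cong C_p$ as well, that is, only for $N_1 \in \mathcal{L}$. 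This reconciles the type-(iii) count with the set $\mathcal{L}$ and rules out any complication arising from nonabelian simple quotients.

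With \ref{direcunmax1} in force I would then analyse when a type-(iii) subgroup $K = \{(x,y) : (N_1 x)\theta = N_2 y\}$ contains $(g,h)$. Since $\langle h \rangle = H$, the coset $N_2 h$ generates $H/N_2 \cong C_p$ and is therefore nontrivial, forcing $(N_1 g)\theta$ to be nontrivial and hence $g \notin N_1$; conversely, if $N_1 \in \mathcal{L}$ and $g \notin N_1$, then $N_1 g$ and $N_2 h$ are both generators of $C_p$, so an isomorphism $\theta$ with $\theta(N_1 g) = N_2 h$ exists and yields such a $K$. Thus $c = 0$ if and only if there is no $N_1 \in \mathcal{L}$ with $g \notin N_1$. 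For one implication I would assume \ref{direcunmax3} and deduce that any $N_1 \in \mathcal{L}$ equals $M_G$, whence $g \in M_G = N_1$ and no type-(iii) subgroup arises, giving $c = 0$. For the other I would assume $c = 0$ and take any $L \in \mathcal{L}$; if $L \ne M_G$ then $g \notin L$ by \ref{direcunmax2}, producing a type-(iii) subgroup and contradicting $c = 0$, so $\mathcal{L} \subseteq \{M_G\}$. I expect the main obstacle to be the bookkeeping in this last step: translating the coset condition $(N_1 g)\theta = N_2 h$ precisely into the membership conditions on $\mathcal{L}$ and on $g$, and verifying both the existence of the required isomorphism $\theta$ and the genuineness of the resulting type-(iii) maximal subgroup via Lemma \ref{lem:maxsubgdirect}.
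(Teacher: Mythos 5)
Your proposal is correct and follows essentially the same route as the paper's proof: both reduce to the three types of maximal subgroups in Lemma \ref{lem:maxsubgdirect}, observe that types (i) and (ii) account for conditions \ref{direcunmax2} and \ref{direcunmax1} respectively, and then use the cyclicity of $H$ (forced by \ref{direcunmax1}) to show that a type-(iii) subgroup containing $(g,h)$ exists precisely when $\mathcal{L}$ contains a subgroup other than $M_G$. Your version merely makes explicit a few details the paper leaves implicit, such as the prime index of normal maximal subgroups and the construction of the isomorphism $\theta$.
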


\begin{proof}
Since $G$ is finitely generated and $\langle g \rangle \ne G$, there exists a maximal subgroup $M_G$ of $G$ containing $g$. Similarly, there exists a maximal subgroup $M_H$ of $H$ containing $h$ if and only if $\langle h \rangle \ne H$. Notice that $(g,h)$ lies in the maximal subgroups $M_G \times H$ and $G \times M_H$ of $G \times H$ for every such $M_G$ and $M_H$. Hence if $(g,h)$ lies in a unique maximal subgroup of $G \times H$, then (i) and (ii) hold. For the remainder of the proof, we will assume that these conditions hold.

Lemma \ref{lem:maxsubgdirect} shows that $(g,h)$ lies in a maximal subgroup of $G \times H$ other than $M_G \times H$ if and only if there exist maximal normal subgroups $N_1$ of $G$ and $N_2$ of $H$, and an isomorphism $\theta: G/N_1 \to H/N_2$ with $(N_1g)\theta = N_2h$. If this is the case, then since $\langle h \rangle = H$, we conclude that $\langle N_2h \rangle = H/N_2$, and hence $\langle N_1g \rangle = G/N_1$.

Now, the quotients of $H$ by its maximal normal subgroups are exactly the cyclic groups of order a prime dividing $|H|$. Hence such a normal subgroup $N_1$ of $G$ exists if and only if $N_1 \in \mathcal{L}$ and $g \notin N_1$, i.e., if and only if $\mathcal{L}$ contains a subgroup not equal to $M_G$. Therefore, $M_G \times H$ is the unique maximal subgroup of $G \times H$ containing $(g,h)$ if and only if (iii) holds, as required.
\end{proof}



We are now able to describe the isolated vertices of $\nc(G \times H)$.

\begin{cor}
\label{cor:direcisol}
Let $G$ and $H$ be groups, with $G$ non-abelian, and let $g \in G$ and $h \in H$. Additionally, let $\mathcal{L}$ be the set of maximal subgroups $L$ of $G$ such that $L \trianglelefteq G$ and $|G:L|$ divides $|H|$. Then $(g,h)$ is an isolated vertex of $\nc(G \times H)$ if and only if all of the following hold:
\begin{enumerate}[label={(\roman*)},font=\upshape]
\item $\langle h \rangle = H$; \label{direcisol1}
\item $g$ is an isolated vertex of $\nc(G)$; and \label{direcisol2}
\item \label{direcisol3} $|\mathcal{L}| \le 1$, and if $\mathcal{L} = \{L\}$, then $g \in L$.
\end{enumerate}
\end{cor}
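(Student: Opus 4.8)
The plan is to reduce the statement to the combination of two facts: that isolation of a non-central vertex is governed by lying in a \emph{unique} maximal subgroup and being central therein, and that Theorem \ref{thm:direcunmax} already controls when $(g,h)$ lies in a unique maximal subgroup of $G\times H$. So I would prove the corollary as a chain of equivalences, using Theorem \ref{thm:direcunmax} for the ``unique maximal subgroup'' part and computing a centre for the remaining part.

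First I would dispose of the case where $H$ is non-cyclic. Then condition (i) fails for every $h\in H$, so the right-hand side never holds; and by Lemma \ref{lem:direc2gen} the graph $\nc(G\times H)$ is connected with diameter $2$, hence has no isolated vertices, so the left-hand side never holds either. The equivalence is thus vacuous, and I may assume $H=\langle h_0\rangle$ is cyclic. In that case $Z(G\times H)=Z(G)\times H$, so every vertex $(g,h)$ of $\nc(G\times H)$ satisfies $g\notin Z(G)$; in particular $g$ is a vertex of $\nc(G)$, and (i) reads $\langle h\rangle = H$.

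Next I would isolate the characterisation implicit in the proof of Proposition \ref{prop:isolvertnc}, which only uses finite generation (not $2$-generation): in a finitely generated non-abelian group $K$, a non-central $v$ is isolated in $\nc(K)$ if and only if $v$ lies in a unique maximal subgroup $M$ of $K$ and $v\in Z(M)$ (the forward direction because two distinct maximal subgroups generate $K$, so $v$ is centralised by at most one of them; the reverse being immediate). Applying this with $K=G\times H$, Theorem \ref{thm:direcunmax} tells me that $(g,h)$ lies in a unique maximal subgroup of $G\times H$ precisely when (i) holds, $g$ lies in a unique maximal subgroup $M_G$ of $G$, and $\mathcal L\subseteq\{M_G\}$; moreover the proof of that theorem identifies this unique subgroup as $M_G\times H$. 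Since $H$ is abelian, $Z(M_G\times H)=Z(M_G)\times H$, so $(g,h)\in Z(M_G\times H)$ if and only if $g\in Z(M_G)$.

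Assembling these, $(g,h)$ is isolated exactly when (i) holds, $g$ lies in a unique maximal subgroup $M_G$ of $G$ with $g\in Z(M_G)$, and $\mathcal L\subseteq\{M_G\}$. Applying the same characterisation with $K=G$, the middle clause is precisely condition (ii) that $g$ is isolated in $\nc(G)$. The final clause then needs to be rewritten without reference to $M_G$: since $M_G$ is the unique maximal subgroup of $G$ containing $g$, the inclusion $\mathcal L\subseteq\{M_G\}$ says exactly that $|\mathcal L|\le 1$ and that any $L\in\mathcal L$ equals $M_G$, which (again by uniqueness) is the same as $g\in L$ --- this is condition (iii). I expect this last rephrasing to be the main obstacle: one must check that, granted (ii), the two conditions ``$L=M_G$'' and ``$g\in L$'' coincide, so that Theorem \ref{thm:direcunmax}'s extrinsic condition $\mathcal L\subseteq\{M_G\}$ becomes the intrinsic condition (iii).
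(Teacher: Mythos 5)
Your overall route is the paper's: combine the maximal-subgroup characterisation of isolated vertices (Proposition \ref{prop:isolvertnc}) with Theorem \ref{thm:direcunmax}, identify the unique maximal subgroup of $G\times H$ containing $(g,h)$ as $M_G\times H$, compute its centre, and translate the extrinsic condition $\mathcal L\subseteq\{M_G\}$ into the intrinsic condition (iii). All of those steps are sound, including your observation that the criterion in Proposition \ref{prop:isolvertnc} really only needs finite generation rather than $2$-generation (the paper itself tacitly relies on this when applying the proposition to $G\times H$), and your disposal of non-cyclic $H$ via Lemma \ref{lem:direc2gen} is a harmless variant of the paper's reduction.

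The gap is that the corollary is stated for arbitrary groups, and your reduction removes only the case of non-cyclic $H$; nothing in your argument addresses the possibility that $G$ is not finitely generated. In that case $G\times H$ is not finitely generated either, Theorem \ref{thm:direcunmax} (whose hypotheses include finite generation of $G$ and $H$) is unavailable, and the isolation criterion you apply with $K=G\times H$ and $K=G$ breaks down, since a proper subgroup of a non-finitely-generated group need not lie in any maximal subgroup (indeed $G$ may have no maximal subgroups at all). So your chain of equivalences never gets started for such $G$. The paper closes this off at the outset using Corollary \ref{cor:highgengp}: if $G$ is not $2$-generated then neither is $G\times H$, so $\nc(G)$ and $\nc(G\times H)$ have no isolated vertices, and both sides of the equivalence fail (condition (ii) failing on the right). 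Adding that one observation — which also covers finitely generated but non-$2$-generated $G$, where your argument happens to go through anyway — makes your proof complete and essentially identical to the paper's.
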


\begin{proof}
Recall from Corollary \ref{cor:highgengp} that the non-commuting, non-generating graph of a group that is not $2$-generated has no isolated vertices. Hence if $G$ is not $2$-generated, then (ii) does not hold, and if $H$ is not $2$-generated, then (i) does not hold. In either of these cases, $G \times H$ is not $2$-generated, and hence $\nc(G \times H)$ has no isolated vertices. We may therefore assume that $G$ and $H$ are $2$-generated.

Suppose first that $(g,h)$ is an isolated vertex of $\nc(G \times H)$. Then Proposition \ref{prop:isolvertnc} implies that $G \times H$ has a unique maximal subgroup $K$ containing $(g,h)$, and $(g,h) \in Z(K)$. Hence by Theorem \ref{thm:direcunmax}, (i) and (iii) hold, and $g$ lies in a unique maximal subgroup $M_G$ of $G$. Since $(g,h)$ lies in the maximal subgroup $M_G \times H$ of $G \times H$, we conclude that $K = M_G \times H$. In addition, since $(g,h) \in Z(K)$, we deduce that $g \in Z(M_G)$. Proposition \ref{prop:isolvertnc} therefore yields (ii).

Conversely, suppose that (i), (ii) and (iii) all hold. Then it follows from Proposition \ref{prop:isolvertnc} that $g$ lies in a unique maximal subgroup $M_G$ of $G$, and $g \in Z(M_G)$. Hence Theorem \ref{thm:direcunmax} implies that $M_G \times H$ is the unique maximal subgroup of $G \times H$ containing $(g,h)$. Since $(g,h) \in Z(M_G \times H)$, the vertex $(g,h)$ is isolated in $\nc(G)$ by Proposition \ref{prop:isolvertnc}.
\end{proof}

\begin{proof}[Proof of Theorem \ref{thm:nilpgpnc}]

Observe that exactly one of (i), (ii) and (iii) holds. We may therefore consider each case separately.

\medskip

\noindent (i) This is clear.

\medskip

\noindent (ii) This is an immediate consequence of Lemma \ref{lem:direc2gen}.

\medskip

\noindent (iii)(a) As $\nc(P)$ is not the empty graph, Theorem \ref{thm:nilpncsummary} implies that $\nc(P)$ is connected with diameter $2$ or $3$. Hence Propositions \ref{prop:nodiamonenc} and \ref{prop:direccyclic} show that $\nc(G)$ is connected, with $1 < \diam(\nc(G)) \le \diam(\nc(P))$. It therefore suffices to find a pair of vertices of $\nc(G)$ of distance $3$ in the case $\diam(\nc(P)) = 3$. Here, Theorem \ref{thm:pgpnc} shows that $P$ is $2$-generated, and that there exists a non-abelian maximal subgroup $M$ of $P$ with $Z(P) < Z(M)$. As $|P|$ and $|H|$ are coprime, $G$ is also $2$-generated.

Let $a \in M \setminus (\Phi(P) \cup Z(M))$, $b \in Z(M) \setminus Z(P)$ and $x \in H$, with $\langle x \rangle = H$. By Lemma \ref{lem:zinfrat}\ref{zinfrat2}, $M$ is the unique maximal subgroup of $P$ containing $a$. As $|P|$ is coprime to $|H|$, the set $\mathcal{L}$ in Theorem \ref{thm:direcunmax} is empty, and so $M \times H$ is the unique maximal subgroup of $G$ containing $(a,x)$. Additionally, $(b,x)$ lies in $Z(M \times H)$, while $(a,x)$ does not. Since $\nc(P)$ contains no isolated vertices, we conclude from Proposition \ref{prop:isolvertnc} that there exists a maximal subgroup $L$ of $P$ with $b \in L \setminus Z(L)$, and hence $(b,x) \in (L \times H) \setminus Z(L \times H)$. Applying Lemma \ref{lem:ncmaxnorm} to the $4$-tuple $((b,x),L \times H,(a,x),M \times H)$ therefore yields $d((a,x),(b,x)) = 3$.

\medskip

\noindent (iii)(b) Since $|P|$ is coprime to $|H|$, the set $\mathcal{L}$ in Corollary \ref{cor:direcisol} is empty. Hence this corollary shows that the set of isolated vertices of $\nc(G)$ is as required. In particular, $\nc(G) \ne \nd(G)$ and $\nd(G)$ is not empty (note that if $x \in P \setminus Z(P)$, then $(x,1)$ is a vertex of $\nd(G)$). Theorem \ref{thm:nilpncsummary} therefore implies that $\nd(G)$ is connected with diameter $2$.
\end{proof}

\subsection*{Acknowledgements}
The authors would like to thank the Isaac Newton Institute for Mathematical Sciences for support and hospitality during the programme ``Groups, representations and applications: new perspectives'', when work on this paper was undertaken. This work was supported by EPSRC grant number EP/R014604/1. In addition, this work was partially supported by a grant from the Simons Foundation. Finally, the second author was supported by a St Leonard's International Doctoral Fees Scholarship and a School of Mathematics \& Statistics PhD Funding Scholarship at the University of St Andrews.

\bibliographystyle{plain}
\bibliography{Nilprefs}

\end{document}